\newcommand{\field}[1]{\mathbb{#1}}
\newtheorem{thm}{Theorem}[section]
\newtheorem{remk}[thm]{Remark}
\newtheorem{theorem}{Theorem}[section]
\newtheorem{corollary}[thm]{Corollary}
\newtheorem{lemma}[thm]{Lemma}
\newtheorem{proposition}[thm]{Proposition}
\begin{document}
\title{On a $L^\infty$ functional derivative estimate relating to the Cauchy problem for scalar semi-linear parabolic partial differential equations with general continuous nonlinearity}
\author{John Christopher Meyer and David John Needham}
\maketitle

% MSC numbers primary 35K58, secondary 39B62, 35B45 

\begin{abstract}
In this paper, we consider a $L^\infty$ functional derivative estimate for the first spatial derivative of bounded classical solutions $u:\field{R}\times [0,T]\to\field{R}$ to the Cauchy problem for scalar semi-linear parabolic partial differential equations with a continuous nonlinearity $f:\field{R}\to\field{R}$ and initial data $u_0:\field{R}\to\field{R}$, of the form, 
\[ \sup_{x\in\field{R}}|u_x (x , t)| \leq \mathcal{F}_t (f,u_0,u) \ \ \ \forall t\in [0,T] . \] 
Here $\mathcal{F}_t:\mathcal{A}_t\to\field{R}$ is a functional as defined in \textsection 1. We establish that the functional derivative estimate is non-trivially sharp, by constructing a sequence $(f_n,0,u^{(n)})$, where for each $n\in\field{N}$, $u^{(n)}:\field{R}\times [0,T]\to\field{R}$ is a solution to the Cauchy problem with zero initial data and nonlinearity $f_n:\field{R}\to\field{R}$, and for which $\sup_{x\in\field{R}} |u_x^{(n)}(x,T)| \geq \alpha >0$, with
\[ \lim_{n\to\infty} \left( \inf_{t\in [0,T]}  \left(  \sup_{x\in\field{R}}|u_x^{(n)}(\cdot , t)| - \mathcal{F}_t (f_n , 0 , u^{(n)}) \right) \right) = 0 . \]
\end{abstract}

\section{Introduction}
To begin, for each $\lambda >0$, we introduce the sets
\[ \partial D = \field{R}\times \{ 0 \},\ \ \ D_\lambda = \field{R}\times (0,\lambda ],  \]
together with the Banach spaces $(B_A^\lambda , ||\cdot ||_A^\lambda)$ and $(B_B,||\cdot ||_B)$ where
\[ B_A^\lambda = \{ u:\bar{D}_\lambda\to\field{R}\ :\ u\text{ is continuous and bounded.}\} \]
\[ B_B = \{ v:\field{R}\to\field{R}\ :\ v\text{ is continuous and bounded.}\}  \] 
and
\[ || u ||_A^\lambda = \sup_{(x,t)\in \bar{D}_\lambda} |u(x,t)| \ \ \ \forall u\in B_A^\lambda \]
\[ || v ||_B = \sup_{x\in \field{R}} |v(x)| \ \ \ \forall v\in B_B .\]
Additionally, for $\lambda >0$, we define the set 
\[ \mathcal{A}_\lambda = \{ (f,v,u)\ :\ f\in C(\field{R}),\ v\in \text{BPC}^1(\field{R}),\ u\in B_A^\lambda \} ,\]
where $\text{BPC}^1(\field{R})$ is the set of bounded, continuous functions with piecewise continuous bounded derivative. Finally, for $\lambda >0$, we introduce the functional $\mathcal{F}_\lambda : \mathcal{A}_\lambda \to [0,\infty )$ given by
\begin{equation} \label{func} \mathcal{F}_\lambda (f,v,u) = ||v'||_B + \frac{1}{\sqrt{\pi}}\int_0^\lambda \frac{||f(u(\cdot , \tau ))||_B}{(\lambda - \tau )^{1/2}}d\tau \ \ \ \forall (f,v,u)\in\mathcal{A}_\lambda . \end{equation}
We now have the following elementary results, 

\begin{lemma} \label{lem1.1}
For fixed $\lambda >0$, let $(f,v,u) \in \mathcal{A}_\lambda$, and let $G:[0,\lambda ]\to\field{R}$ be such that 
\[ G(\tau ) = \begin{cases} \frac{ ||f(u(\cdot , \tau ))||_B}{(\lambda-\tau )^{1/2}} &, \tau \in [0, \lambda ) \\ 0 &, \tau = \lambda . \end{cases} \]
Then $G\in L^1([0,\lambda ])$, and 
\[ 0 \leq \int_0^\lambda G(\tau ) d\tau = \lim_{m\to\infty} \int_0^{\lambda - \frac{1}{m}} G(\tau ) d\tau \leq 2 \sqrt{\lambda} ||f(u)||_A^\lambda .\]
\end{lemma}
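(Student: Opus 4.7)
The plan is to verify, in turn, measurability of $G$, the pointwise domination $|G(\tau)|\leq \|f(u)\|_A^\lambda/(\lambda-\tau)^{1/2}$, and then invoke an elementary convergence theorem for the final identity.

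First, I would establish that $\|f(u)\|_A^\lambda$ is finite. Since $u\in B_A^\lambda$, the range $u(\bar{D}_\lambda)$ is contained in some compact interval $[-M,M]\subset\field{R}$, and $f\in C(\field{R})$ is bounded on this interval; continuity of $f\circ u$ on $\bar{D}_\lambda$ then makes $\|f(u)\|_A^\lambda<\infty$. In particular, for every $\tau\in[0,\lambda]$ we have $\|f(u(\cdot,\tau))\|_B\leq \|f(u)\|_A^\lambda$.

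Next I would address measurability of $G$. Because $(x,\tau)\mapsto |f(u(x,\tau))|$ is continuous on $\field{R}\times[0,\lambda]$, for each fixed $\tau$ the sup over $x\in\field{R}$ coincides with the sup over the countable dense set $\field{Q}$, so
\[ \tau \mapsto \|f(u(\cdot,\tau))\|_B = \sup_{x\in\field{Q}} |f(u(x,\tau))| \]
is a supremum of countably many continuous functions and hence Borel measurable on $[0,\lambda]$. Multiplying by the continuous factor $(\lambda-\tau)^{-1/2}$ on $[0,\lambda)$ and setting the value $0$ at $\tau=\lambda$ gives a measurable, nonnegative $G$ on $[0,\lambda]$.

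Combining the two observations yields $0\leq G(\tau)\leq \|f(u)\|_A^\lambda(\lambda-\tau)^{-1/2}$ for $\tau\in[0,\lambda)$. Since $\int_0^\lambda (\lambda-\tau)^{-1/2}\,d\tau = 2\sqrt{\lambda}$, we conclude $G\in L^1([0,\lambda])$ with
\[ 0\leq \int_0^\lambda G(\tau)\,d\tau \leq 2\sqrt{\lambda}\,\|f(u)\|_A^\lambda .\]
For the limit identity, observe that $G\chi_{[0,\lambda-1/m]}$ is an increasing sequence of nonnegative measurable functions converging pointwise to $G\chi_{[0,\lambda)}$ (which equals $G$ a.e.), so the monotone convergence theorem gives $\lim_{m\to\infty}\int_0^{\lambda-1/m}G(\tau)\,d\tau = \int_0^\lambda G(\tau)\,d\tau$. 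No step is genuinely hard; the only point requiring a moment of care is justifying measurability of the sup in $x$, which I would handle via the countable-dense-subset reduction just described.
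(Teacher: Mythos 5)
Your proof is correct, but it takes a genuinely different route from the paper's. You establish measurability of $\tau \mapsto \|f(u(\cdot,\tau))\|_B$ directly, by reducing the supremum over $x\in\field{R}$ to a supremum over the countable dense set $\field{Q}$ (legitimate since $x\mapsto |f(u(x,\tau))|$ is continuous), and then conclude $G\in L^1([0,\lambda])$ by domination against the explicitly integrable function $\|f(u)\|_A^\lambda(\lambda-\tau)^{-1/2}$, with a single application of the monotone convergence theorem only for the limit identity. The paper instead never discusses measurability of an uncountable supremum: it builds $G$ as a double monotone limit of piecewise continuous functions $g_n^m$, truncating both in space (sup over $[-n,n]$, which keeps the integrand piecewise continuous and hence trivially in $L^1$) and in time (cutting off at $\lambda - 1/m$ to avoid the singularity), and applies the monotone convergence theorem twice --- once in $n$ to obtain the intermediate functions $h^m$, and once in $m$ to obtain $G$ itself together with the limit identity and the bound. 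Your version is shorter and isolates the one genuinely delicate point (measurability of the sup in $x$) cleanly; the paper's version buys the same conclusions while staying entirely within the realm of piecewise continuous integrands and monotone limits, at the cost of more bookkeeping. Both arguments yield the identical bound $2\sqrt{\lambda}\,\|f(u)\|_A^\lambda$. One cosmetic remark: for the truncated integrals $\int_0^{\lambda-1/m}G$ to make sense you implicitly need $m > 1/\lambda$, which the paper handles by restricting to $m\geq m_\lambda$; since you only take $m\to\infty$ this is immaterial, but it is worth a word.
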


\begin{proof}
For fixed $\lambda >0$, let $(f,v,u)\in \mathcal{A}_\lambda$. Since $u\in B_A^\lambda$ and $f\in C(\field{R})$, then $f(u)\in B_A^\lambda$ and $f(u(\cdot , \tau ))\in B_B$ for each $\tau \in [0,\lambda ]$. Thus, $G: [0,\lambda ]\to\field{R}$ is well-defined. Next, set $m\in\field{N}$ with $m\geq [\tfrac{1}{\lambda}] + 1:=m_\lambda$ and introduce the sequence of functions $\{ g_n^m:[0,\lambda ] \to\field{R} \}_{n\in\field{N}}$ given by,
\begin{equation} \label{ep1b} g_n^m(\tau ) = \begin{cases}\frac{\sup_{x\in [-n,n]}|f( u (x,\tau ))|}{(\lambda - \tau )^{1/2}}  &,\tau \in \left[ 0, \lambda -\tfrac{1}{m}\right) \\ 0 &, \tau \in \left[ \lambda - \tfrac{1}{m} , \lambda \right] .\end{cases} \end{equation}
Since $f(u)\in B_A^\lambda$, then for each $n\in\field{N}$, it follows that $g_n^m:[0,\lambda ] \to\field{R}$ is piecewise continuous, and so $g_n^m \in L^1 ([0,\lambda ])$. In addition, for each $n\in\field{N}$, 
\begin{equation} \label{ep1f} 0 \leq g_n^m(\tau ) \leq g_{n+1}^m (\tau ) \ \ \ \forall \tau \in  [0,\lambda ], \end{equation}
and, for each $n\in\field{N}$,
\begin{equation} \label{ep1c} \int_0^\lambda g_n^m (\tau ) d\tau \leq 2 \sqrt{\lambda} ||f(u)||_A^\lambda . \end{equation}
Now, introduce $h^m:[0,\lambda ] \to\field{R}$ such that,
\begin{equation} \label{ep1d} h^m(\tau ) = \begin{cases} \frac{||f(u(\cdot , \tau ))||_B}{(\lambda - \tau )^{1/2}} &, \tau \in \left[ 0,\lambda - \tfrac{1}{m}\right) \\ 0 &, \tau \in \left[ \lambda - \tfrac{1}{m} , \lambda \right] .\end{cases} \end{equation}
It follows from \eqref{ep1b} and \eqref{ep1d} that for each $\tau \in [0,\lambda ]$, 
\begin{equation} \label{ep1e} g_n^m(\tau ) \to h^m (\tau ) . \end{equation}
An application of the monotone convergence theorem \cite[pp. 318]{WR1}, via \eqref{ep1f}, \eqref{ep1c} and \eqref{ep1e}, then establishes that $h^m\in L^1([0,\lambda ])$, for each $m \geq m_\lambda$. It follows from \eqref{ep1d}, that for each $m\in\field{N}$ ($m\geq m_\lambda$),
\begin{equation} \label{ep1g} 0 \leq h^m (\tau ) \leq h^{m+1} (\tau ) \ \ \ \forall \tau \in [0,\lambda ] \end{equation}
and,
\begin{equation} \label{ep1h} 0 \leq \int_0^\lambda h^m (\tau ) d\tau \leq 2 \sqrt{\lambda} ||f(u)||_A^\lambda , \end{equation}
whilst, for each $\tau \in [0,\lambda )$, 
\begin{equation} \label{ep1i} h^m(\tau ) \to G(\tau ) \ \ \ \text{ as }m\to\infty . \end{equation}
Finally, an application of the monotone convergence theorem, via \eqref{ep1g}, \eqref{ep1h} and \eqref{ep1i}, then establishes that $G\in L^1([0,\lambda ])$, and, moreover, that, 
\[ 0 \leq \int_0^\lambda G(\tau ) d\tau = \lim_{m\to\infty } \int_0^\lambda h^m(\tau ) d\tau = \lim_{m\to\infty} \int_0^{\lambda - \frac{1}{m}} G(\tau ) d\tau \leq 2 \sqrt{\lambda} ||f(u)||_A^\lambda , \]
as required. 
\end{proof}
\noindent We now have,

\begin{proposition} \label{DAVEARGHHH1}
For each $\lambda >0$, the functional $\mathcal{F}_\lambda : \mathcal{A}_\lambda \to [0,\infty )$ is well-defined, and, for each $(f,v,u)\in \mathcal{A}_\lambda$, then, 
\begin{equation} \label{ebound} \mathcal{F}_\lambda (f,v,u) \leq ||v'||_B + \frac{2\lambda^{1/2}}{\sqrt{\pi}}||f(u)||_A^\lambda  . \end{equation}
\end{proposition}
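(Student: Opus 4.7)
The plan is to derive the proposition almost immediately from Lemma \ref{lem1.1}, by treating the two summands of $\mathcal{F}_\lambda(f,v,u) = \|v'\|_B + \pi^{-1/2}\int_0^\lambda \|f(u(\cdot,\tau))\|_B(\lambda-\tau)^{-1/2}\,d\tau$ separately. For the first summand, I would note that the hypothesis $v \in \text{BPC}^1(\field{R})$ guarantees that $v'$ is bounded (and piecewise continuous), so $\|v'\|_B = \sup_{x\in\field{R}}|v'(x)|$ is a well-defined, finite, non-negative real number. No further work is required for this term.

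For the integral summand, I would identify its integrand on $[0,\lambda)$ with the function $G:[0,\lambda]\to\field{R}$ of Lemma \ref{lem1.1} (the two agree on $[0,\lambda)$ and differ only at the single endpoint $\tau=\lambda$, a set of Lebesgue measure zero, so the two integrals over $[0,\lambda]$ coincide). An application of Lemma \ref{lem1.1} then simultaneously supplies three conclusions: (i) $G\in L^1([0,\lambda])$, so the integral appearing in \eqref{func} is a bona fide Lebesgue integral; (ii) this integral is non-negative; and (iii) it is bounded above by $2\sqrt{\lambda}\,\|f(u)\|_A^\lambda$. Combining (i)--(iii) with the treatment of $\|v'\|_B$ shows at once that $\mathcal{F}_\lambda(f,v,u)\in[0,\infty)$, establishing well-definedness, and that on dividing the bound in (iii) by $\sqrt{\pi}$ and adding $\|v'\|_B$ one recovers precisely the estimate \eqref{ebound}.

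I do not anticipate any genuine obstacle: all the delicate analytic content — namely, controlling the integrable singularity of $(\lambda-\tau)^{-1/2}$ at $\tau=\lambda$ through a double application of the monotone convergence theorem — has already been absorbed into Lemma \ref{lem1.1}. The proposition is therefore essentially a bookkeeping corollary of that lemma, the only point requiring momentary care being the identification of the integrand in \eqref{func} with the function $G$ of the lemma.
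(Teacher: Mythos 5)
Your proposal is correct and follows essentially the same route as the paper's own proof: both arguments identify the integrand in \eqref{func} with the function $G$ of Lemma \ref{lem1.1}, invoke that lemma for integrability, non-negativity and the bound $2\sqrt{\lambda}\,||f(u)||_A^\lambda$, and note that $||v'||_B$ exists since $v\in\text{BPC}^1(\field{R})$. Nothing is missing.
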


\begin{proof}
Let $(f,v,u)\in \mathcal{A}_\lambda$. Then, via Lemma \ref{lem1.1}, $G\in L^1([0,\lambda ])$, and, since $v\in \text{BPC}^1(\field{R})$, then $||v'||_B$ exists. Now, via \eqref{func}, 
\begin{equation} \label{p2a} \mathcal{F}_\lambda (f,v,u) = ||v'||_B + \frac{1}{\sqrt{\pi}}\int_0^\lambda G(\tau ) d\tau ,\end{equation}
and so $\mathcal{F}_\lambda : \mathcal{A}_\lambda \to [0,\infty )$ is well-defined. In addition, the inequality follows directly from \eqref{p2a} and Lemma \ref{lem1.1}. 
\end{proof}

\noindent In the remainder of the paper, we consider the sharpness of a derivative estimate for solutions {\mbox{$u:\bar{D}_T\to\field{R}$}} to the semi-linear parabolic initial value problem ($T>0$) given by,
\begin{equation} \label{1} u_t - u_{xx} = f(u) \ \ \  \text{ on } D_T, \end{equation}
\begin{equation} \label{2} u = u_0 \ \ \ \text{ on }\partial D , \end{equation} 
with nonlinearity $f\in C(\field{R})$ and initial data $u_0\in \text{BPC}^1(\field{R})$. We consider bounded solutions to the initial value problem \eqref{1}-\eqref{2} (henceforth referred to as [IVP]), which are classical, in the sense that 
\[ u\in C(\bar{D}_T)\cap C^{2,1}(D_T)\cap L^\infty (\bar{D}_T).\]
Related to [IVP], we introduce the set $\mathcal{I}_T \subset \mathcal{A}_T$ such that 
\begin{align}
\nonumber  \mathcal{I}_T = & \{ (f,u_0,u): (f,u_0,u)\in\mathcal{A}_T \text{ and }  \\
\label{2'} & \ \   u:\bar{D}_T\to\field{R} \text{ is a solution to [IVP] with }f \text{ and } u_0.  \}  
\end{align}
We observe (for any $T>0$) that $\mathcal{I}_T$ is non-empty (take $(f,v,u)\in \mathcal{A}_T$ with each being the zero function). We also observe, via \cite{JCMDJN4}, that for any $T>0$, when $f=f_p:\field{R}\to\field{R}$ (for any $0<p<1$) is given by
\begin{equation} \label{fp} f_p(u) = u|u|^{p-1} \ \ \ \forall u\in\field{R} \end{equation} 
and $u_0:\field{R}\to\field{R}$ is given by $u_0(x)=0$ for all $x \in\field{R}$, it has been established in \cite{JCMDJN4} that there exists non-trivial $u=u_p:\bar{D}_T\to\field{R}$ such that 
\[ (f_p,0,u_p)\in \mathcal{I}_T . \]
We will examine [IVP] with $f=f_p$ in detail, in \textsection 2 and \textsection 4. Now, we consider a Schauder-type derivative estimate for [IVP], which is a straightforward extension of those given in \cite[Lemma 5.12]{JMDN1} and \cite[Lemma 3.9]{JMDN3}. 

\begin{proposition}[Derivative Estimate] \label{lem233}
Let $(f,u_0,u)\in \mathcal{I}_T$. Then, for each $0<t\leq T$, it follows that $(f,u_0,u|_{\bar{D}_t})\in\mathcal{I}_t$ and   
\[ || u_x(\cdot , t)||_B \leq \mathcal{F}_t(f,u_0,u|_{\bar{D}_t}).\]
\end{proposition}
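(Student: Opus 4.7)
The first assertion that $(f,u_0,u|_{\bar{D}_t})\in \mathcal{I}_t$ is immediate, since restricting $u$ to $\bar{D}_t$ preserves boundedness, the required regularity $C(\bar{D}_t)\cap C^{2,1}(D_t)\cap L^\infty(\bar{D}_t)$, the PDE on $D_t$, and the initial condition on $\partial D$. For the inequality, my plan is to exploit the Duhamel (heat-kernel) representation of bounded classical solutions of [IVP]: writing $K(z,s) = (4\pi s)^{-1/2} \exp(-z^2/(4s))$, one has, for $(x,t)\in D_t$,
\[ u(x,t) = \int_{\field{R}} K(x-y,t) u_0(y)\, dy + \int_0^t \int_{\field{R}} K(x-y,t-\tau)\, f(u(y,\tau))\, dy\, d\tau. \]
This representation for bounded classical solutions of the Cauchy problem with continuous bounded source is exactly the content of the Schauder-type framework developed in \cite{JMDN1,JMDN3}, and is what their Lemmas 5.12 and 3.9 leverage.

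Next, I would formally differentiate in $x$ and estimate the two resulting integrals separately. For the initial-data term, since $u_0\in\text{BPC}^1(\field{R})$, I would integrate by parts on each smooth piece: the continuity of $u_0$ on $\field{R}$ forces the interior boundary contributions to cancel, while the Gaussian decay of $K(\cdot,t)$ combined with boundedness of $u_0$ kills the boundary terms at $\pm\infty$. This gives
\[ \int_{\field{R}} K_x(x-y,t) u_0(y)\, dy = \int_{\field{R}} K(x-y,t) u_0'(y)\, dy, \]
which, via $\int_{\field{R}} K(x-y,t)\, dy = 1$, is bounded in modulus by $\|u_0'\|_B$. For the source term, I would bring the absolute value inside and invoke the explicit identity
\[ \int_{\field{R}} |K_x(z,s)|\, dz = \frac{1}{\sqrt{\pi s}}, \]
obtained by direct evaluation of the Gaussian moment $\int_0^\infty \tfrac{z}{4s\sqrt{\pi s}} e^{-z^2/(4s)}\, dz$. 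This yields precisely the second term of $\mathcal{F}_t(f,u_0,u|_{\bar{D}_t})$, with Lebesgue-integrability in $\tau$ guaranteed by Lemma \ref{lem1.1}.

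The main obstacle will be the rigorous justification of the manipulations, given the weak singularity of $K_x(x-y,t-\tau)$ as $\tau \to t^-$ and the merely piecewise-$C^1$ regularity of $u_0$. My approach is to carry out differentiation under the integral, Fubini, and the piecewise integration by parts first on the truncated time interval $[0,t-\tfrac{1}{m}]$ (where all integrands are jointly continuous and bounded, and $u_0'$ is defined off a discrete set which is null), then pass to the limit $m\to\infty$. On the source side, the limit is controlled by dominated convergence together with the $L^1$ bound supplied by Lemma \ref{lem1.1}; on the initial-data side, the limit is handled by continuity of the heat semigroup. Combining the two estimates then produces
\[ \|u_x(\cdot,t)\|_B \leq \|u_0'\|_B + \frac{1}{\sqrt{\pi}} \int_0^t \frac{\|f(u(\cdot,\tau))\|_B}{(t-\tau)^{1/2}}\, d\tau = \mathcal{F}_t(f,u_0,u|_{\bar{D}_t}), \]
as required.
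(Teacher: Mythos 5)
Your proposal is correct and follows essentially the same route as the paper: the heat-kernel (Duhamel) representation of the bounded classical solution, differentiation in $x$ of the two terms, the bound $\|u_0'\|_B$ for the initial-data term and the identity $\int_{\field{R}}|K_x(z,s)|\,dz=1/\sqrt{\pi s}$ for the source term, with the singular time integral handled by truncation and Lemma \ref{lem1.1}. The only cosmetic difference is that the paper changes variables to $y=x+2\sqrt{t}\,w$ so the $x$-derivative falls directly on $u_0$ (citing \cite[Lemma 5.9]{JMDN1} for the differentiation under the integral), whereas you keep the convolution form and integrate by parts across the smooth pieces of $u_0$; these are equivalent.
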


\begin{proof}
Since $u:\bar{D}_T\to\field{R}$ is a solution to [IVP] with $f$ and $u_0$, it follows by definition that $u|_{\bar{D}_t}$ is a solution to [IVP] with $f$ and $u_0$ on $\bar{D}_t$ then for any $0<t\leq T$, and hence, $(f,u_0,u|_{\bar{D}_t})\in \mathcal{I}_t$. For convenience we drop the restriction notation from here onward, (with $(f,u_0,u)\in\mathcal{A}_T$, then $(f,u_0,u)\in\mathcal{A}_t$ for each $0<t\leq T$). Now, let $(f,u_0,u)\in\mathcal{I}_T$. Then, since $f(u)\in B_A^T$, it follows via \cite[Theorem 4.9]{JMDN1} that,
\begin{align}
\nonumber u(x,t) = & \frac{1}{\sqrt{\pi}}\int_{-\infty}^{\infty} u_0(x+2\sqrt{t}w ) e^{-w^2}dw \\
\label{p3a} & + \frac{1}{\sqrt{\pi}}\int_{0}^{t} \int_{-\infty}^{\infty} f(u(x+2\sqrt{t-\tau} w ,\tau)) e^{-{w^2}} dw d\tau  \ \ \ \forall (x,t)\in D_T. 
\end{align}
Again, since $f(u)\in B_A^T$ and $u_0\in \text{BPC}^1(\field{R})$, we observe, via \cite[Lemma 5.9]{JMDN1} that both terms on the right hand side of \eqref{p3a} have continuous partial derivatives with respect to $x$ on $D_T$, which are given by,
\begin{equation} \label{psb} \left( \frac{1}{\sqrt{\pi}}\int_{-\infty}^{\infty} u_0(x+2\sqrt{t}w ) e^{-w^2}dw \right)_x = \frac{1}{\sqrt{\pi}}\int_{-\infty}^{\infty} u_0'(x+2\sqrt{t}w ) e^{-w^2}dw \ \ \ \forall (x,t)\in D_T , \end{equation}
\begin{align}
\nonumber & \left( \frac{1}{\sqrt{\pi}}\int_{0}^{t} \int_{-\infty}^{\infty} f(u(x+2\sqrt{t-\tau} w ,\tau)) e^{-{w^2}} dw d\tau \right)_x \\
\label{psc} & \hspace{10mm} = \frac{1}{\sqrt{\pi}}\lim_{\epsilon\to 0} \int_0^{t-\epsilon} \hspace{-2mm} \int_{-\infty}^\infty \frac{f(u(x+2\sqrt{t-\tau}w,\tau ))}{(t-\tau )^{1/2}} w e^{-w^2} dw d\tau \ \ \ \forall (x,t)\in D_T, 
\end{align}
and so,
\begin{align}
\nonumber u_x(x,t) =& \frac{1}{\sqrt{\pi}}\int_{-\infty}^{\infty} u_0'(x+2\sqrt{t}w ) e^{-w^2}dw \\
\label{psd} &\ + \frac{1}{\sqrt{\pi}}\lim_{\epsilon\to 0} \int_0^{t-\epsilon} \hspace{-2mm} \int_{-\infty}^\infty \frac{f(u(x+2\sqrt{t-\tau}w,\tau ))}{(t-\tau )^{1/2}} w e^{-w^2} dw d\tau \ \ \ \forall (x,t) \in D_T . 
\end{align}
Therefore, 
\begin{align}
\nonumber |u_x(x,t)|& \leq \  ||u_0'||_B  \\
\label{psf} & + \frac{1}{\sqrt{\pi}}\lim_{\epsilon\to 0} \int_0^{t-\epsilon} \hspace{-2mm} \int_{-\infty}^\infty \left|\frac{f(u(x+2\sqrt{t-\tau}w,\tau ))}{(t-\tau )^{1/2}} w e^{-w^2}\right| dw d\tau \ \ \forall (x,t)\in D_T . 
\end{align}
It follows from Lemma \ref{lem1.1} that
\begin{align}
\nonumber \lim_{\epsilon\to 0} & \int_0^{t-\epsilon} \hspace{-2mm} \int_{-\infty}^\infty \left| \frac{f(u(x+2\sqrt{t-\tau}w,\tau ))}{(t-\tau )^{1/2}} w e^{-w^2}\right| dw d\tau  \\ 
\nonumber & \leq \lim_{\epsilon\to 0} \int_0^{t-\epsilon}  \hspace{-2mm} \int_{-\infty}^\infty \frac{||f(u(\cdot ,\tau ))||_B}{(t-\tau )^{1/2}} |w| e^{-w^2} dw d\tau \\
\label{pse} & = \int_0^{t} \frac{||f(u(\cdot ,\tau ))||_B}{(t-\tau )^{1/2}} d\tau \ \ \ \forall (x,t)\in D_T .     
\end{align}
Therefore, from \eqref{psf} and \eqref{pse}, we have,
\begin{equation} \label{psg} |u_x(x,t)| \leq ||u_0'||_B + \frac{1}{\sqrt{\pi}} \int_0^t \frac{||f(u(\cdot , \tau ))||_B}{(t-\tau )^{1/2}} d\tau = \mathcal{F}_t(f,u_0,u) \ \ \ \forall (x,t)\in D_T . \end{equation}
Since the right hand side of \eqref{psg} is independent of $x\in\field{R}$ the result follows.
\end{proof}
\noindent A classical Schauder-type derivative estimate can now be obtained as follows,

\begin{corollary}[Schauder-type Estimate] \label{ec3}
Let $(f,u_0,u)\in \mathcal{I}_T$. Then,    
\[ || u_x(\cdot , t)||_B \leq ||u_0'||_B + \frac{2t^{1/2}}{\sqrt{\pi}} ||f( u)||_A^t \ \ \ \forall t\in (0,T] .\]
\end{corollary}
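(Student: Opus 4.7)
The plan is to chain together the two results just proved: Proposition \ref{lem233} converts the pointwise supremum $\|u_x(\cdot,t)\|_B$ into a bound by the functional $\mathcal{F}_t(f,u_0,u|_{\bar{D}_t})$, and Proposition \ref{DAVEARGHHH1} converts that functional bound into the explicit Schauder-type form on the right-hand side. There is essentially no new analytic content to introduce; the work is bookkeeping.

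First, fix $t \in (0,T]$. Apply Proposition \ref{lem233} to $(f,u_0,u) \in \mathcal{I}_T$ to conclude that $(f,u_0,u|_{\bar{D}_t}) \in \mathcal{I}_t \subset \mathcal{A}_t$ and
\[ \|u_x(\cdot,t)\|_B \leq \mathcal{F}_t(f,u_0,u|_{\bar{D}_t}). \]
Since $(f,u_0,u|_{\bar{D}_t}) \in \mathcal{A}_t$, Proposition \ref{DAVEARGHHH1} (with $\lambda = t$) gives
\[ \mathcal{F}_t(f,u_0,u|_{\bar{D}_t}) \leq \|u_0'\|_B + \frac{2t^{1/2}}{\sqrt{\pi}}\|f(u|_{\bar{D}_t})\|_A^t. \]
Finally, observe that $\|f(u|_{\bar{D}_t})\|_A^t = \sup_{(x,\tau) \in \bar{D}_t}|f(u(x,\tau))| = \|f(u)\|_A^t$ by the definition of $\|\cdot\|_A^t$ (both sides are sups over the same set $\bar{D}_t$), so combining the two inequalities yields the stated bound for every $t \in (0,T]$.

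There is no real obstacle here. The only point requiring a moment's attention is the notational identification $\|f(u|_{\bar{D}_t})\|_A^t = \|f(u)\|_A^t$, which is used implicitly already in the statement of Proposition \ref{lem233} (where the authors adopt the convention of dropping restriction notation). Apart from that, the proof is a one-line chaining of the two preceding results.
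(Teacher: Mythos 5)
Your proof is correct and follows exactly the route the paper takes: the authors' own proof is the one-line observation that the result ``follows directly from Proposition \ref{lem233} and Proposition \ref{DAVEARGHHH1}'', which is precisely your chaining of the derivative estimate with the bound on $\mathcal{F}_t$. Your added remark on the identification $||f(u|_{\bar{D}_t})||_A^t = ||f(u)||_A^t$ is a harmless bit of extra care consistent with the convention the paper adopts in the proof of Proposition \ref{lem233}.
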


\begin{proof}
This follows directly from Proposition \ref{lem233} and Proposition \ref{DAVEARGHHH1}.
\end{proof}
\noindent For each $(f,u_0,u)\in \mathcal{I}_T$, we now have, via Proposition \ref{lem233} and Proposition \ref{DAVEARGHHH1}, that
\begin{equation} \label{ee2} -\left( ||u_0'||_B + \frac{2T^{1/2}}{\sqrt{\pi}} ||f( u )||_A^T\right) \leq ||u_x(\cdot , t) ||_B - \mathcal{F}_t(f,u_0,u) \leq 0 \ \ \ \forall t\in (0,T]. \end{equation}
Therefore, given $(f,u_0,u)\in\mathcal{I}_T$, then $(||u_x(\cdot , t) ||_B - \mathcal{F}_t(f,u_0,u)) $ is bounded uniformly above and below for $t\in [0,T]$. Moreover, via \eqref{ee2}, it follows that for any $(f,u_0,u)\in\mathcal{I}_T$,
\begin{equation} \label{ee3} -||u_0'||_B - \frac{2T^{1/2}}{\sqrt{\pi}} ||f( u )||_A^T \leq \inf_{t\in (0,T]} \left( ||u_x(\cdot , t) ||_B - \mathcal{F}_t (f,u_0,u) \right) \leq 0. \end{equation}
Now, motivated by Proposition \ref{lem233} and \eqref{ee3}, we refer to the derivative estimate in Proposition \ref{lem233} as \textit{sharp} on $\bar{D}_T$, if 
\[ \sup_{(f,u_0,u)\in\mathcal{I}_T} \left( \inf_{t\in (0,T]} \left( ||u_x(\cdot , t) ||_B - \mathcal{F}_t (f,u_0,u) \right) \right) = 0. \]
However, we observe that this definition is not immediately satisfactory due to the following example. Take, $f^*\in C(\field{R})$, $u_0^*\in \text{BPC}^1(\field{R})$ and $u^*\in B_A^T$ to be
\begin{equation} \label{eex1} f^*(u) = 0\ \ \ \forall u\in\field{R}, \end{equation} 
\begin{equation} \label{eex2} u_0^*(x) = 0\ \ \ \forall x\in\field{R}, \end{equation} 
\begin{equation} \label{eex3} u^*(x,t) = 0\ \ \ \forall (x,t)\in \bar{D}_T. \end{equation}
Then, trivially, $(f^*,u_0^*,u^*)\in\mathcal{I}_T$, with, 
\begin{equation} \label{eex4} ||u_x^*(\cdot , t)||_B = 0 \ \ \ \forall t\in (0,T] , \end{equation}
\begin{equation} \label{eex5} \mathcal{F}_t(f^*,u_0^*,u^*) = 0 \ \ \ \forall t\in (0,T]. \end{equation}
Thus, it follows from \eqref{eex4} and \eqref{eex5} that
\begin{equation} \label{eex6} \inf_{t\in (0,T]} \left( ||u_x^*(\cdot , t)||_B - \mathcal{F}_t(f^*,u_0^*,u^*) \right) = 0. \end{equation}
Finally, it follows from \eqref{eex6} and \eqref{ee3} that 
\[ \sup_{(f,u_0,u)\in\mathcal{I}_T} \left( \inf_{t\in (0,T]} \left( ||u_x(\cdot , t) ||_B - \mathcal{F}_t (f,u_0,u) \right) \right) = 0 ,\]
and hence, the derivative estimate in Proposition \ref{lem233}, according to the above definition, is sharp. To remove such trivial cases, we introduce the following improvement to the above definition; namely, we refer to the derivative estimate in Proposition \ref{lem233} as \textit{non-trivially sharp} on $\bar{D}_T$ when there exists $\alpha >0$ such that 
\[ \sup_{(f,u_0,u)\in\mathcal{I}_T^\alpha} \left( \inf_{t\in (0,T]} \left( ||u_x(\cdot , t) ||_B - \mathcal{F}_t (f,u_0,u) \right) \right) = 0, \]
where 
\[ \mathcal{I}_T^\alpha = \{ (f,u_0,u): (f,u_0,u)\in\mathcal{I}_T \text{ and } ||u_x(\cdot , T)||_B \geq \alpha \}. \]
We can now state the main result in this paper, as

\begin{theorem} \label{tmain}
For any $\alpha , T>0$, there exists $u_0\in \text{BPC}^{\hspace{0.5mm} 1}(\field{R})$ and a sequence $\{(f_n , u_0, u_n)\in\mathcal{I}_T^\alpha \}_{n\in\field{N}}$, such that 
\[ \lim_{ n\to\infty } \left( \inf_{t\in (0,T]} \left( ||u_{nx}(\cdot , t) ||_B - \mathcal{F}_t (f_n,u_0,u_n) \right) \right) = 0 .\]
\end{theorem}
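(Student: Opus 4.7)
The strategy is to exploit the non-uniqueness phenomenon for [IVP] with H\"older nonlinearity $f_{p}(u)=u|u|^{p-1}$, $0<p<1$, recalled from \cite{JCMDJN4}: with $u_{0}\equiv 0$ this admits non-trivial classical solutions, whereas any Lipschitz choice of $f$ would force $u_{n}\equiv 0$ and so fail the non-triviality constraint $\|u_{nx}(\cdot,T)\|_{B}\geq \alpha$. I therefore take $u_{0}\equiv 0$, and for each $n$ build $f_{n}$ from the family $\{f_{p}\}_{0<p<1}$ (possibly modified outside the range of $u_{n}$ to keep it in $C(\field{R})$), together with a non-trivial classical solution $u_{n}$ supplied by \cite{JCMDJN4}. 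A space--time rescaling will then be used to enforce $(f_{n},0,u_{n})\in \mathcal{I}_{T}^{\alpha}$, so the core task reduces to arranging near-equality in Proposition~\ref{lem233}, uniformly in $t\in(0,T]$.

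The geometric input comes from inspecting where slack enters the proof of Proposition~\ref{lem233}. With $u_{0}\equiv 0$ the only inequality used is
\[ \left|\int_{-\infty}^{\infty} f(u(x+2\sqrt{t-\tau}w,\tau))\, w e^{-w^{2}}\,dw\right| \leq \|f(u(\cdot,\tau))\|_{B}\int_{-\infty}^{\infty}|w|e^{-w^{2}}\,dw, \]
which is nearly tight precisely when $\mathrm{sign}(f(u(x+y,\tau)))=\mathrm{sign}(y)$ and $|f(u(x+y,\tau))|$ is close to $\|f(u(\cdot,\tau))\|_{B}$ for most $y\in\field{R}$. Since $f_{p}$ is odd, I would take $u_{n}$ to be spatially antisymmetric about a point $x_{n}\in\field{R}$ and then evaluate the representation \eqref{psd} at $x=x_{n}$, which automatically aligns signs in the $w$-integral. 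Shaping $u_{n}$ so that it saturates near $\pm M_{n}$ outside a neighbourhood of $x_{n}$ whose width vanishes as $n\to\infty$ then forces the absolute integrand in \eqref{psd} to converge pointwise in $\tau$ to $\|f_{n}(u_{n}(\cdot,\tau))\|_{B}|w|e^{-w^{2}}/(t-\tau)^{1/2}$; dominated convergence, with integrable envelope supplied by Lemma~\ref{lem1.1}, then yields $|u_{nx}(x_{n},t)|-\mathcal{F}_{t}(f_{n},0,u_{n})\to 0$. Combined with the universal upper bound in Proposition~\ref{lem233}, this delivers the limit for each fixed $t$.

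The main difficulty is upgrading pointwise-in-$t$ tightness to uniformity on $(0,T]$, since both $\|u_{nx}(\cdot,t)\|_{B}$ and $\mathcal{F}_{t}(f_{n},0,u_{n})$ vanish as $t\to 0^{+}$ and could in principle carry $t$-dependent slack of differing orders. Here I would invoke the scaling symmetry of [IVP] with $f_{p}$ and zero initial data: solutions of self-similar form $u_{n}(x,t)=t^{1/(1-p_{n})}\phi_{n}(x/\sqrt{t})$ make both $\|u_{nx}(\cdot,t)\|_{B}$ and $\mathcal{F}_{t}(f_{n},0,u_{n})$ factor as the common power $t^{1/(1-p_{n})-1/2}$ times quantities depending only on the profile $\phi_{n}$, so uniformity in $t$ collapses to a single inequality on $\phi_{n}$, and the constraint $\|u_{nx}(\cdot,T)\|_{B}\geq\alpha$ becomes a rescaling condition on $\|\phi_{n}'\|_{B}$. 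The hard step, which I would expect to occupy \textsection 2 and \textsection 4 referenced in the introduction, is the analytical construction of the requisite odd, step-like self-similar profiles $\phi_{n}$ satisfying the associated profile ODE for $f_{p_{n}}$; the natural limit is $p_{n}\to 1^{-}$ together with a spatial dilation pushing $\phi_{n}$ toward the antisymmetric step shape, and verifying that all three requirements (classical solution with zero data, normalisation $\alpha$ at $t=T$, and near-extremality on $\phi_{n}$) can be met simultaneously is the genuinely non-trivial technical point.
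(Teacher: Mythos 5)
Your scaffolding matches the paper's: both use the non-uniqueness family $f_p(u)=u|u|^{p-1}$ with $u_0\equiv 0$, the anti-symmetric self-similar solutions $u^{(p)}(x,t)=w_p(xt^{-1/2})t^{1/(1-p)}$ so that $\|u_x^{(p)}(\cdot,t)\|_B-\mathcal{F}_t(f_p,0,u^{(p)})$ factors as a single power of $t$ times a profile quantity (collapsing the uniformity in $t$), and a constant rescaling of $f$ to meet the normalisation $\|u_{nx}(\cdot,T)\|_B\geq\alpha$. Your saturation heuristic in the second paragraph is also the correct mechanism: the estimate is nearly tight when $|f(u(\cdot,\tau))|$ sits near $\|f(u(\cdot,\tau))\|_B$ over most of space.

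However, there is a genuine gap, and it is at the decisive step: you propose taking $p_n\to 1^{-}$, whereas the correct (and the paper's) limit is $p_n\to 0^{+}$. Your own heuristic already points this way: saturation of $|f_p(u^{(p)})|$ at its sup-norm requires $f_p$ to become step-like in $u$, which happens as $p\to 0^{+}$ (where $f_p\to\mathrm{sign}$), not as $p\to 1^{-}$ (where $f_p\to\mathrm{id}$ and $|w_p|^{p}$ is nowhere near flat). Quantitatively, after rescaling so that $\|u_{nx}(\cdot,T)\|_B\geq\alpha$, the deficit at $t=T$ is bounded below by $\alpha\bigl(\phi(p_n)/w_{p_n}'(0)-1\bigr)$ with $\phi$ as in \eqref{phi}, so you need the \emph{ratio} $w_p'(0)/\phi(p)\to 1$; as $p\to 1^{-}$ both quantities tend to $0$ and the available bounds \eqref{D10} degenerate (their ratio blows up like $(1-p)^{-1}$), so nothing forces the ratio to $1$. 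Moreover, the ``spatial dilation'' you invoke is not available: the self-similar profile equation \eqref{28*} fixes the shape in $\eta$, and the only remaining freedom is an amplitude rescaling of $f$. The missing idea is the content of \textsection 3--4: as $p\to 0^{+}$ one has $\phi(p)\to 2/\sqrt{\pi}$ by \eqref{D5}, and a compactness (Arzel\`a--Ascoli) argument shows a subsequence of the profiles $w_{p_n}$ converges in $C^1_{\mathrm{loc}}$ to the unique solution $w_0$ of the explicit \emph{linear} limiting boundary value problem (S$_0$), for which $w_0'(0)=2/\sqrt{\pi}$ exactly by \eqref{D13}; this identification of the two limits is what closes the gap, and it is absent from your proposal.
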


\noindent It then follows immediately from Theorem \ref{tmain} that
\begin{corollary} \label{cmain}
The derivative estimate in Proposition \ref{lem233} is non-trivially sharp on $\bar{D}_T$ for any $T>0$.
\end{corollary}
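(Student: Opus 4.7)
My plan is to take zero initial data $u_0 \equiv 0$, so that $\|u_0'\|_B = 0$ and the inequality in Proposition \ref{lem233} reduces to
\[ \|u_{nx}(\cdot, t)\|_B \leq \frac{1}{\sqrt{\pi}} \int_0^t \frac{\|f_n(u_n(\cdot, \tau))\|_B}{(t-\tau)^{1/2}} d\tau . \]
Tracing the two successive inequalities in the proof of Proposition \ref{lem233}, one sees that equality is recovered precisely when, at the point $x$ where $|u_x(\cdot, t)|$ is maximized, the map $y \mapsto f(u(y, \tau))$ is antisymmetric about $x$ with magnitude $\|f(u(\cdot, \tau))\|_B$ almost everywhere. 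This is the geometric target: I would look for $u_n$ that are odd in $x$ (forcing the maximum of $|u_{nx}|$ at $x = 0$) and for which $|u_n(x, \tau)|$ saturates rapidly to its spatial supremum on $|x| \geq \delta_n$ for some width $\delta_n \to 0$ as $n \to \infty$.

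To guarantee that non-trivial solutions with zero initial data exist at all, I would invoke the $f_p$ construction of \cite{JCMDJN4}, using $f_p(u) = u|u|^{p-1}$ with $0 < p < 1$, since this family is already flagged in the introduction as supplying elements of $\mathcal{I}_T$ with zero data. The remaining freedom is to tailor $f_n$ (by modifying, cutting off, or rescaling $f_p$) and to select the associated branch $u_n$ so that, in addition to oddness, the profile $u_n(\cdot, \tau)$ acquires the step-like shape described above. A natural route is to take a rescaled odd-reflected version of the $u_p$ from \cite{JCMDJN4}, then to match $f_n$ to the rescaled $u_n$ through the relation $f_n(u_n) = u_{nt} - u_{nxx}$ on the range of $u_n$, extended continuously to all of $\mathbb{R}$. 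The amplitude parameter can then be tuned at the end so that $\|u_{nx}(\cdot, T)\|_B \geq \alpha$, placing $(f_n, 0, u_n) \in \mathcal{I}_T^\alpha$.

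The sharpness verification proceeds from the representation \eqref{psd} evaluated at $x = 0$: using oddness of $f_n(u_n(\cdot, \tau))$, the integrand becomes $|w| \, |f_n(u_n(2\sqrt{t-\tau}|w|, \tau))| / (t-\tau)^{1/2}$, so the saturation property replaces $|f_n(u_n(2\sqrt{t-\tau}|w|, \tau))|$ with $\|f_n(u_n(\cdot, \tau))\|_B$ up to an error that is controlled by the measure of the transition region, which vanishes as $\delta_n \to 0$. Combined with $\tfrac{1}{\sqrt{\pi}}\int_{-\infty}^{\infty}|w| e^{-w^2}\, dw = 1$, this yields
\[ u_{nx}(0, t) = \mathcal{F}_t(f_n, 0, u_n) - \varepsilon_n(t), \qquad \varepsilon_n(t) \to 0 . \]
Combining this with Proposition \ref{lem233} (which forces $\|u_{nx}(\cdot, t)\|_B = u_{nx}(0, t)$ in the odd case) and a quantitative estimate on $\varepsilon_n(t)$, together with Lemma \ref{lem1.1} to absorb the contribution near $\tau = t$, would give the required uniform decay in $t \in (0, T]$.

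The main obstacle is simultaneously enforcing (i) classicality of $u_n$ as a solution of a semi-linear heat equation with zero initial data, which imposes smoothness and parabolic regularity, and (ii) the step-like character of $f_n(u_n(\cdot, \tau))$ uniformly down to $\tau = 0$, which is pulling against both the $(t-\tau)^{-1/2}$ singularity in $\mathcal{F}_t$ and the fact that $u_n(\cdot, 0) \equiv 0$. The small-$\tau$ regime is therefore the delicate one: the construction must produce transition widths $\delta_n(\tau)$ that collapse fast enough in $n$ to give a controlled error $\varepsilon_n(t)$, yet compatibly with the parabolic regularization that $u_n$ itself must exhibit near $t = 0$. This is presumably why the authors anchor the construction to the specific $f_p$ nonlinearity, for which the non-uniqueness machinery of \cite{JCMDJN4} supplies a non-trivial solution branch from zero data that can be moulded into the required shape.
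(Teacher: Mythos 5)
Your overall construction --- zero initial data, the odd self-similar solutions generated by $f_p(u)=u|u|^{p-1}$ from \cite{JCMDJN4}, and an amplitude rescaling at the end to land in $\mathcal{I}_T^\alpha$ --- is exactly the paper's (the rescaling appears as $(c(\alpha,T)f_{p_{n_l}},0,u^{(l)})$ in \textsection 4; no ``extension of $f_n$ from the range of $u_n$'' is needed, since rescaling $u^{(p)}$ by $\lambda$ simply replaces $f_p$ by $\lambda^{1-p}f_p$, which is already continuous on all of $\field{R}$). But the verification step, which is where all the work lies, is left as an assertion, and the assertion as stated targets the wrong object. It is not $|u_n(x,\tau)|$ that becomes step-like: the self-similar profile $w_p$ converges to the smooth function $w_0$ of \textsection 3, whose transition width is of order one in $\eta=x\tau^{-1/2}$. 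What saturates is the normalized forcing $f_p(u^{(p)}(x,\tau))/\|f_p(u^{(p)}(\cdot,\tau))\|_B=|w_p(\eta)|^{p}/(1-p)^{p/(1-p)}$, and it does so because $f_p\to\mathrm{sgn}$ as $p\to 0$, i.e.\ $a^{p}\to 1$ for $a$ bounded away from $0$ --- not because the solution profile sharpens. Consequently the one estimate your argument cannot do without is a lower bound on $w_p(\eta)$ for $\eta>0$ that is \emph{uniform in} $p$ (in the paper, Proposition \ref{P12}: $w_p(\eta)\geq \min(\eta,\eta')/(8\sqrt{2})$, which in turn rests on the nontrivial lower bound \eqref{29*} for $w_p'(0)$). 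Without it, $|w_{p_n}(\eta)|^{p_n}$ need not tend to $1$ (for instance $w_{p_n}(\eta)=e^{-1/p_n}$ is consistent with positivity yet gives limit $e^{-1}$), and your ``error controlled by the measure of the transition region'' has no content. Two smaller errors: Proposition \ref{lem233} plus oddness does not force $\|u_{nx}(\cdot,t)\|_B=u_{nx}(0,t)$; that comes from the monotonicity property \eqref{S4} of $w_p'$, imported from \cite{JCMDJN4}. And the small-$\tau$ regime you flag as the principal obstacle is a non-issue for this family: by self-similarity the deficit $\|u^{(p)}_x(\cdot,t)\|_B-\mathcal{F}_t(f_p,0,u^{(p)})$ is a negative constant times $t^{(1+p)/2(1-p)}$, so the infimum over $t\in(0,T]$ is attained at $t=T$ and only that single time need be examined.

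Had you supplied the uniform lower bound, your route --- dominated convergence applied at $x=0$, $t=T$ to the difference between the bound \eqref{pse} and the representation \eqref{psd} --- would close, and it would be genuinely different from, and shorter than, the paper's: the paper instead computes $\mathcal{F}_t(f_p,0,u^{(p)})=\phi(p)t^{(1+p)/2(1-p)}$ in closed form, reduces the whole problem to showing $w_p'(0)\to 2/\sqrt{\pi}=\lim_{p\to 0^+}\phi(p)$ along a subsequence, and obtains that limit by an Arzel\`a--Ascoli compactness argument identifying subsequential limits of $w_p$ with the unique solution $w_0$ of the limiting boundary value problem (S$_0$). As written, however, the proposal restates the difficulty (``the construction must produce transition widths that collapse fast enough'') rather than resolving it, so the central analytic content of the result is missing.
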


The paper is structured as follows. In \textsection 2, for $0<p<1$ we introduce \mbox{$( f_p , 0, u^{(p)} )\in \mathcal{I}_T$} with $u^{(p)}:\bar{D}_T\to\field{R}$ specific nontrivial anti-symmetric self similar solutions to [IVP], which correspond to front solutions in \cite{JCMDJN4}. In \textsection 3, we consider a formal limit as $p\to 0$ of a boundary value problem for the ordinary differential equation associated with $u^{(p)}$. In \textsection 4, we establish Theorem \ref{tmain}. Finally in \textsection 5 we discuss alternative approaches to establish Theorem \ref{tmain}, as well as similar problems for which this type of theorem may be established.

\section{The Problem (P$_p$)}
For $p\in (0,1)$, consider [IVP] with nonlinearity $f_p:\field{R}\to\field{R}$ given by \eqref{fp} and initial data $u_0:\field{R}\to\field{R}$ such that $u_0 = 0$. Henceforth we will refer to this as (P$_p$). In \cite[Theorem 3.14]{JCMDJN4} it is demonstrated that, for any $T>0$, there exists a self similar solution $u^{(p)} :\bar{D}_T\to\field{R}$ to (P$_p$) of the form
\begin{equation} \label{S1} u^{(p)}(x,t) = w_p\left(\eta (x,t) \right) t^{1/(1-p)} \ \ \ \forall (x,t)\in \bar{D}_T ,\end{equation}
with $\eta =  xt^{-1/2}$ whilst $w_p:\field{R}\to \field{R}$ is such that $w_p\in C^2(\field{R})$, and 
\begin{equation} \label{28*} w_p'' + \frac{1}{2}\eta w_p' + f_p(w_p) - \frac{1}{(1-p)}w_p = 0 \ \ \ \forall \eta \in \field{R} , \end{equation}
\begin{equation} \label{30*} w(-\eta ) = -w(\eta ) \ \ \ \forall \eta \in\field{R} , \end{equation}
\begin{equation} \label{S2} |w_p(\eta )| < (1-p)^{1/(1-p)} \ \ \ \forall \eta \in \field{R} , \end{equation} 
\begin{equation} \label{S3} w_p(\eta ) \to \pm (1-p)^{1/(1-p)},\   w_p'(\eta )\to 0 \text{ as } \eta \to \pm \infty , \end{equation}
\begin{equation} \label{S4} 0 < w_p'(\eta ) < \sup_{\eta \in \field{R}} |w_p' (\eta )|  = w_p' (0) \ \ \ \forall \eta \in \field{R}\backslash \{ 0 \} , \end{equation}
\begin{equation} \label{29*} w_p'(0) > \frac{(1-p)^{1/(1-p)}}{(1-p)^{1/2}} .  \end{equation}
The function $w_p:\field{R}\to\field{R}$, for $p\in (0,1)$, will be used extensively throughout the rest of the paper. Now, since $u^{(p)}:\bar{D}_T\to\field{R}$ given by \eqref{S1} is a solution to (P$_p$) for any $T>0$, we have that 
\begin{equation} \label{smiles} (f_p , 0 , u^{(p)})\in\mathcal{I}_T. \end{equation}
In addition, it follows from \eqref{fp}, \eqref{S1}, \eqref{S2} and \eqref{S3}, that,
\begin{equation} \label{D1} \mathcal{F}_t(f_p, 0, u^{(p)}) = \frac{(1-p)^{p/(1-p)} \Gamma (1/(1-p))}{\Gamma ( (3-p)/2(1-p))} t^{(1+p)/2(1-p)  } \ \ \ \forall t\in (0,T] , \end{equation}
whilst from \eqref{S1} and \eqref{S4}, 
\begin{equation} \label{D2} ||u_x^{(p)} (\cdot , t ) ||_B = w_p'(0) t^{(1+p)/2(1-p)} \ \ \ \forall t\in (0,T]. \end{equation}
Therefore, via \eqref{D1}, \eqref{D2} and Proposition \ref{lem233}, 
\begin{equation} \label{D3} ||u_x^{(p)}(\cdot , t) ||_B - \mathcal{F}_t (f_p,0,u^{(p)}) = \left( w_p'(0) - \phi (p)\right)t^{(1+p)/2(1-p)} \leq 0 \ \ \ \forall  t\in (0,T] \end{equation}
where $\phi :(0,1)\to\field{R}$ is given by 
\begin{equation} \label{phi} \phi (p) = \frac{(1-p)^{p/(1-p)} \Gamma (1/(1-p))}{\Gamma ((3-p)/2(1-p))} \ \ \ \forall p\in (0,1) .\end{equation}
Furthermore, it follows that the inequality in \eqref{D3} is {\emph{strict}}, by substituting into \eqref{pse} 
\[ |f_p(u^{(p)}(x,t))| < ||f_p(u^{(p)}(\cdot , t))||_B \ \ \ \forall (x,t)\in D_T,\]
which follows from \eqref{S1}, \eqref{S2} and \eqref{S3}, and proceeding with the proof of Proposition \ref{lem233}. We observe that,
\begin{equation} \label{D4} \phi (p) >0 \ \ \ \forall p\in (0,1), \end{equation}
\begin{equation} \label{D5} \phi (p) \to \frac{2}{\sqrt{\pi}} \text{ as }p\to 0^+ .\end{equation}
In addition, it follows from Proposition \ref{DAVEARGHHH1}, with \eqref{fp}, \eqref{S1} and \eqref{S2}, that,
\begin{equation} \label{D6} \mathcal{F}_t(f_p , 0 , u^{(p)}) \leq \frac{2}{\sqrt{\pi}}(1-p)^{p/(1-p)}t^{(1+p)/2(1-p)} \ \ \ \forall t\in (0,T]. \end{equation}
Then, via \eqref{D1} and \eqref{D6}, we have, 
\begin{equation} \label{D7} \phi (p) \leq \frac{2}{\sqrt{\pi}}(1-p)^{p/(1-p)} < \frac{2}{\sqrt{\pi}} \ \ \ \forall p\in (0,1) . \end{equation}
Now, we conclude from the discussion following \eqref{D3} that  
\begin{equation} \label{D8} \inf_{t\in (0,T]} \left( ||u_x^{(p)}(\cdot , t)||_B - \mathcal{F}_t (f_p,0,u^{(p)}) \right) = (w_p'(0) - \phi (p) ) T^{(1+p)/2(1-p)} < 0 . \end{equation}
We also observe from \eqref{S1} and \eqref{29*} that 
\begin{equation} \label{D9} ||u_x^{(p)}(\cdot , T)||_B = w_p'(0)T^{(1+p)/2(1-p)} > \frac{(1-p)^{1/(1-p)}}{(1+p)^{1/2}} T^{(1+p)/2(1-p)} . \end{equation}
A proof of Theorem \ref{tmain} will now follow, up to minor detail, if we are able to construct a sequence $\{p_n\}_{n\in\field{N}}$, such that $p_n\to 0$ as $n\to \infty$, and 
\[ w_{p_n}'(0) \to \frac{2}{\sqrt{\pi}} \text{ as } n\to\infty . \]
It is the construction of such as sequence which we now address. However, before proceeding, it is worth noting from \eqref{D3} and \eqref{29*}, that at this stage, we have, 
\begin{equation} \label{D10} \frac{(1-p)^{1/(1-p)}}{(1+p)^{1/2}} < w_p'(0) < \phi (p) < \frac{2}{\sqrt{\pi}} \ \ \ \forall p\in (0,1). \end{equation} 
We proceed by examining the solution $w_0:\field{R}^+\to\field{R}$ to a boundary value problem in which the ordinary differential equation is a formal limiting form of that in \eqref{28*}, as $p\to 0^+$. We then show that there exists a sequence $\{p_n\}_{n\in\field{N}}$, such that $p_n\to 0$ as $n\to\infty$ and 
\begin{equation} \label{sadface} w_{p_n} \to w_0 \text{ and }w_{p_n}'\to w_0' \text{ uniformly on } [0,X] \text{ as }n\to\infty , \end{equation}
for any $X>0$. The result then follows on observing that $w_0'(0) = 2/\sqrt{\pi}$.

\section{The problem ($S_0$)} \label{sec0}
In this section, we examine the problem given by taking the formal limit as $p\to 0$ in the initial value problem for the ordinary differential equation studied in \cite{JCMDJN4}. We seek a function $w:[0,\infty )\to\field{R}$ such that $w\in C([0,\infty )) \cap C^2((0,\infty ))$ and
\begin{align}
\label{B1} & w'' + \frac{1}{2} \eta w' - w = -1 \ \ \ \forall \eta >0 ,\\
\label{B2} & w(0)= 0,\ \ \ w(\eta ) \to 1 \text{ as }\eta\to\infty ,\\
\label{B3} & w(\eta ) > 0 \ \ \ \forall \eta > 0 .
\end{align}
We refer to this linear inhomogeneous boundary value problem as (S$_0$). We observe that the coefficients in \eqref{B1} are continuous functions of $\eta \in [0,\infty )$. Thus, the homogeneous part \eqref{B1} has two basis functions $w_1,w_2:[0,\infty )\to\field{R}$ and a particular integral $\bar{w}:[0,\infty )\to\field{R}$ after which every solution of \eqref{B1} may be written as,
\[ w(\eta ) = Aw_1(\eta ) + Bw_2(\eta ) + \bar{w}(\eta ) \ \ \ \forall \eta \in [0,\infty ) , \]
with $A,B\in\field{R}$ being arbitrary constants. Inspection, followed by the method of reduction of order allows us to take 
\begin{align*}
& w_1(\eta ) = 2+ \eta^2 \\
& w_2(\eta ) = (2+\eta^2 )\int_\eta^\infty \frac{ e^{-s^2/4}}{(2+s^2)^2} ds \\
& \bar{w}(\eta ) = 1 
\end{align*}
for all $\eta \in [0,\infty )$. It remains to apply conditions \eqref{B2} and \eqref{B3}. These conditions are satisfied if and only if we choose
\[ A=0\text{ and } B=-\frac{4}{\sqrt{\pi}}. \]
Thus (S$_0$) has a unique solution $w=w_0:[0,\infty )\to\field{R}$ given by 
\begin{equation} \label{D11} w_0(\eta ) = 1- \frac{4}{\sqrt{\pi}} (2+\eta^2)I(\eta ) \ \ \ \forall \eta\in [0,\infty ) \end{equation}
where 
\begin{equation} \label{D12} I(\eta ) = \int_\eta^\infty  \frac{ e^{-s^2/4}}{(2+s^2)^2} ds \ \ \ \forall \eta \in [0,\infty ) \end{equation}
and we note that $I(\eta )$ is monotone decreasing in $\eta \in [0,\infty )$ with $I(0) = \sqrt{\pi}/8$ (see \cite[pp. 302, 7.4.11]{MAIAS1}), and $I(\eta )$ decays exponentially as $\eta\to\infty$. Finally, we observe from \eqref{D11} and \eqref{D12} that 
\begin{equation} \label{D13} w_0'(0) = \frac{2}{\sqrt{\pi}} . \end{equation}
In the following section, we proceed to construct the sequence of functions $w_{p_n}:\field{R}\to\field{R}$ for which \eqref{sadface} holds.

\section{Proof of Theorem 1.4}
In this section, we construct  a sequence $\{p_n\}_{n\in\field{N}}$ such that $p_n\in (0,1)$ for all $n\in\field{N}$, $p_n\to 0$ as $n\to\infty$ and $w_{p_n}:\field{R}\to\field{R}$ satisfies
\begin{equation} \label{confusedface} w_{p_n}\to w_0 , \ \ \ w_{p_n}'\to w_0' \text{ uniformly on } [0,X] \end{equation}
for any $X>0$, where $w_0:[0,\infty )\to\field{R}$ is the unique solution to (S$_0$), given by \eqref{D6}. We note that via \eqref{confusedface} and \eqref{D13}, we have 
\[ w_{p_n}'(0) \to \frac{2}{\sqrt{\pi}} \text{ as }n\to\infty ,\]
which is crucial to the proof of Theorem \ref{tmain}.  

Throughout this section we consider $w_p:\field{R}\to\field{R}$ restricted to the domain $[0,\infty )$, so that $w_p=w_p:[0,\infty )\to\field{R}$. To begin, we obtain uniform bounds on $w_p$, $w_p'$ and $w_p''$ for $p\in (0,1)$. We have first,  

\begin{proposition} \label{P1}
Consider $w_p: [0,\infty )\to \field{R}$ with $p\in (0,1)$. Then,
\[ 0\leq w_p(\eta ) < 1 \ \ \ \forall \eta \geq 0 , \]
and
\[ |w_p(\eta_1 ) - w_p(\eta_2 ) | \leq \frac{2}{\sqrt{\pi}} |\eta_1 - \eta_2 | \ \ \ \forall \eta_1 , \eta_2 \geq 0 . \]
\end{proposition}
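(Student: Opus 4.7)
The plan is to derive both estimates as direct consequences of the structural properties of $w_p$ collected in Section 2, rather than by any fresh analysis of the ODE \eqref{28*}.

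For the first statement, the lower bound $w_p(\eta) \geq 0$ for $\eta \geq 0$ follows immediately: the antisymmetry condition \eqref{30*} forces $w_p(0) = 0$, and \eqref{S4} gives $w_p'(\eta) > 0$ on $(0,\infty)$, so $w_p$ is strictly increasing on $[0,\infty)$ and stays above its value at the origin. For the upper bound I would combine \eqref{S2} with the elementary inequality $(1-p)^{1/(1-p)} < 1$ valid for every $p \in (0,1)$. The latter follows from taking logarithms: on $(0,1)$ we have $\ln(1-p) < 0$ and $1-p > 0$, so $\ln\bigl((1-p)^{1/(1-p)}\bigr) = \ln(1-p)/(1-p) < 0$. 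Chaining gives
\[ 0 \leq w_p(\eta) \leq |w_p(\eta)| < (1-p)^{1/(1-p)} < 1 \quad \forall \eta \geq 0, \]
which is the first claimed bound.

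For the Lipschitz-type estimate, the plan is to apply the mean value theorem after showing that $w_p'$ is uniformly bounded on $[0,\infty)$ by $2/\sqrt{\pi}$. Property \eqref{S4} tells us that $w_p'$ attains its supremum at $\eta = 0$, so $0 < w_p'(\eta) \leq w_p'(0)$ for all $\eta \geq 0$. At this point I would invoke \eqref{D10}, which was already established in Section 2 as a consequence of Proposition \ref{lem233}, Proposition \ref{DAVEARGHHH1} together with \eqref{S2}, and which records in particular that $w_p'(0) < 2/\sqrt{\pi}$ for every $p \in (0,1)$. Given $\eta_1,\eta_2 \geq 0$, the mean value theorem produces some $\xi$ between them with $w_p(\eta_1) - w_p(\eta_2) = w_p'(\xi)(\eta_1 - \eta_2)$, and the bound $|w_p'(\xi)| \leq w_p'(0) < 2/\sqrt{\pi}$ delivers the Lipschitz estimate.

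There is no serious technical obstacle here: the proposition is an assembly of facts already in hand. The only care needed is to make sure the uniformity in $p$ is preserved; this is exactly where \eqref{D7}, and hence \eqref{D10}, do the work, because the bound $\phi(p) < 2/\sqrt{\pi}$ holds uniformly in $p \in (0,1)$ and controls $w_p'(0)$ from above independently of $p$. This is what makes the constant $2/\sqrt{\pi}$ in the Lipschitz estimate independent of $p$, which will be essential for the compactness argument in the next subsection where we extract the sequence $\{p_n\}$ realising \eqref{confusedface}.
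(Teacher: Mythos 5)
Your argument is correct and follows essentially the same route as the paper's proof: the first bound comes from \eqref{S2}, \eqref{S4} (with $w_p(0)=0$ via \eqref{30*}) and the elementary inequality $(1-p)^{1/(1-p)}<1$, and the Lipschitz estimate comes from the mean value theorem together with $0<w_p'(\eta)\leq w_p'(0)<2/\sqrt{\pi}$, which is exactly the content of \eqref{S4} and \eqref{D10}. The extra detail you supply on the uniformity in $p$ is consistent with how the paper uses the result later.
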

 
\begin{proof}
It follows from \eqref{S2}, \eqref{S4} and \eqref{D10} that for $p\in (0,1)$
\[ 0 \leq w_p(\eta ) \leq (1-p)^{1/(1-p)} < 1 \ \ \ \forall \eta \geq 0 , \]
and
\begin{equation} \label{1A} 0 < w_p'(\eta ) \leq w_p'(0)< \frac{2}{\sqrt{\pi}} \ \ \ \forall \eta \geq 0. \end{equation}
Therefore, via the mean value theorem with \eqref{1A}, we have
\[ |w_p(\eta_1 ) - w_p(\eta_2 )| \leq \sup_{\theta \in [0,\infty )} \{ w_p'(\theta ) \} |\eta_1 - \eta_2| \leq \frac{2}{\sqrt{\pi}} |\eta_1 - \eta_2 | \ \ \ \forall \eta_1 , \eta_2 \geq 0, \]
as required. 
\end{proof}
\noindent Additionally, we have

\begin{proposition} \label{P2}
Consider $w_p: [0,\infty )\to \field{R}$ with $p\in (0,1)$. Then,
\begin{equation} \label{62*}  0 <  w_p'(\eta ) < \frac{2}{\sqrt{\pi}} \ \ \ \forall \eta \geq 0 \end{equation}
and for any $X>0$,
\[ |w_p'(\eta_1 ) - w_p'(\eta_2 )| \leq \left( \frac{X}{\sqrt{\pi}} + 2 \right) |\eta_1 - \eta_2 |\ \ \ \forall \eta_1 ,\eta_2\in [0,X]. \]
\end{proposition}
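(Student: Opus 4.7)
The strategy is to first dispose of the pointwise bound, then derive the Lipschitz estimate by using the ordinary differential equation \eqref{28*} to convert a bound on $w_p$ and $w_p'$ into a uniform bound on $w_p''$.

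For the pointwise bound \eqref{62*}, the upper inequality $w_p'(\eta) < 2/\sqrt{\pi}$ is essentially already established: for $\eta \neq 0$, property \eqref{S4} gives $0 < w_p'(\eta) < w_p'(0)$, while \eqref{D10} gives $w_p'(0) < 2/\sqrt{\pi}$. At $\eta = 0$, positivity is recovered from \eqref{29*}, which gives $w_p'(0) > (1-p)^{1/(1-p)}/(1-p)^{1/2} > 0$. So this part essentially repeats inequality \eqref{1A} of Proposition \ref{P1}, merely noting strict inequality on the full half-line.

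For the Lipschitz estimate, I would rearrange the ODE \eqref{28*} as
\[ w_p''(\eta) = -\tfrac{1}{2}\eta \, w_p'(\eta) - f_p(w_p(\eta)) + \tfrac{1}{1-p} w_p(\eta) \]
and bound each term uniformly on $[0,X]$. The first term is controlled by the pointwise bound just established: $|\tfrac{1}{2}\eta \, w_p'(\eta)| \leq \tfrac{X}{\sqrt{\pi}}$. For the nonlinear term, \eqref{fp} and \eqref{S2} give $|f_p(w_p(\eta))| = |w_p(\eta)|^p \leq (1-p)^{p/(1-p)}$. For the linear term, \eqref{S2} similarly gives $|\tfrac{1}{1-p} w_p(\eta)| \leq (1-p)^{1/(1-p)-1} = (1-p)^{p/(1-p)}$. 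Since $\log(1-p) < 0$ for $p\in(0,1)$, we have $(1-p)^{p/(1-p)} < 1$, so each of the latter two terms is bounded by $1$. Combining, $|w_p''(\eta)| \leq X/\sqrt{\pi} + 2$ on $[0,X]$, and the mean value theorem applied to $w_p'$ on $[0,X]$ then yields the claimed Lipschitz estimate.

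I do not expect a genuine obstacle here: the only subtle point is that the coefficient $1/(1-p)$ in \eqref{28*} blows up as $p \to 1^-$, but this is exactly cancelled by the stronger decay $|w_p| < (1-p)^{1/(1-p)}$ supplied by \eqref{S2}, producing a $p$-uniform bound. The proof is essentially a one-line calculation once the ODE is solved for $w_p''$.
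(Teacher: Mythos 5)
Your proposal is correct and follows essentially the same route as the paper: the pointwise bound comes from \eqref{S4} together with \eqref{D10}, and the Lipschitz estimate comes from solving \eqref{28*} for $w_p''$, bounding the three terms by $X/\sqrt{\pi}$ and $2(1-p)^{p/(1-p)}\leq 2$ via \eqref{S2}, and applying the mean value theorem to $w_p'$. Your observation that the $1/(1-p)$ coefficient is exactly cancelled by the decay in \eqref{S2} is precisely the computation the paper performs in \eqref{2B}.
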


\begin{proof}
The first inequality follows from \eqref{S4} and \eqref{D10}. Next, via the mean value theorem,
\begin{equation} \label{2A} |w_p'(\eta_1) - w_p'(\eta_2)| \leq \sup_{\theta \in [0,X] }| w_p''(\theta )| | \eta_1 - \eta_2 | \ \ \ \forall \eta_1 ,\eta_2 \in [0,X]. \end{equation}
Now, from \eqref{28*}, \eqref{S2} and \eqref{62*}, 
\begin{equation} \label{2B} |w_p''(\theta )|  \leq \left| \frac{\theta}{2}w_p'(\theta ) \right| + |w_p(\theta )|^p + \left|\frac{1}{(1-p)}w_p(\theta )\right| \leq \frac{X}{\sqrt{\pi}} + 2(1-p)^{p/(1-p)}   \leq  \frac{X}{\sqrt{\pi}} + 2 \end{equation}
for all $\theta \in [0,X]$. The result then follows from \eqref{2A} and \eqref{2B}.
\end{proof}
\noindent Before we can obtain a result for $u_p''$ corresponding to Proposition \ref{P1} and Proposition \ref{P2}, we need the following.

\begin{proposition} \label{P12}
Consider $w_p:[0,\infty )\to \field{R}$ with $p\in (0,1/2]$. Then
\[ w_p(\eta ) \geq  \begin{cases} \frac{1}{8\sqrt{2}}\eta &, 0\leq \eta \leq \eta ' \\ \frac{\eta '}{8\sqrt{2}} &, \eta > \eta ' \end{cases} \]
where
\begin{equation} \label{12A'} \eta ' =\sqrt{\pi} \left(\sqrt{ 1 + \frac{1}{4\sqrt{2\pi}}} -1 \right) <1 . \end{equation}
\end{proposition}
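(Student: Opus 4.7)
The plan is to first establish a pointwise lower bound $w_p'(\eta) > 1/(8\sqrt{2})$ on $[0, \eta']$, from which the linear estimate on $[0, \eta']$ follows by integrating (noting $w_p(0) = 0$ by antisymmetry \eqref{30*}), and the flat bound for $\eta > \eta'$ follows from the monotonicity $w_p' > 0$ of \eqref{S4}, yielding $w_p(\eta) \geq w_p(\eta') \geq \eta'/(8\sqrt{2})$.

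The $w_p'$ bound is assembled in two moves. First, the bound $w_p'(0) > (1-p)^{1/(1-p)}/(1+p)^{1/2}$ from \eqref{D10}, combined with the monotonicity of $p \mapsto (1-p)^{1/(1-p)}$ (decreasing in $p \in (0,1)$, verified by $d/dp[\log(1-p)/(1-p)] = [\log(1-p)-1]/(1-p)^2 < 0$) and of $p \mapsto (1+p)^{1/2}$, gives, for $p \in (0, 1/2]$,
\[ w_p'(0) > \frac{(1/2)^2}{(3/2)^{1/2}} = \frac{\sqrt{2/3}}{4} > \frac{1}{4\sqrt{2}}, \]
the last inequality since $2/3 > 1/2$. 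Second, rearranging the ODE \eqref{28*} as $w_p''(s) = -(s/2)w_p'(s) - w_p(s)^p + w_p(s)/(1-p)$, and using Proposition \ref{P2} ($w_p'\leq 2/\sqrt{\pi}$), \eqref{S2} ($w_p < 1$ hence $w_p(s)^p < 1$), together with $w_p/(1-p) \geq 0$, I would derive the pointwise inequality $w_p''(s) \geq -s/\sqrt{\pi} - 1$ on $[0, \infty)$. Integrating from $0$ to $\eta$ and inserting the bound on $w_p'(0)$ yields $w_p'(\eta) > 1/(4\sqrt{2}) - \eta^2/(2\sqrt{\pi}) - \eta$.

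The key algebraic observation is that the stated $\eta'$ is precisely the positive root of the quadratic $\eta^2 + 2\sqrt{\pi}\eta - \sqrt{\pi/32} = 0$; after dividing by $2\sqrt{\pi}$ this reads
\[ \frac{\eta'^2}{2\sqrt{\pi}} + \eta' = \frac{1}{8\sqrt{2}}. \]
Since $\eta \mapsto \eta^2/(2\sqrt{\pi}) + \eta$ is increasing on $[0, \infty)$, the same expression is at most $1/(8\sqrt{2})$ throughout $[0, \eta']$, giving $w_p'(\eta) > 1/(4\sqrt{2}) - 1/(8\sqrt{2}) = 1/(8\sqrt{2})$ as required. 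The subsidiary claim $\eta' < 1$ reduces after squaring to $1/(4\sqrt{2\pi}) < 2/\sqrt{\pi} + 1/\pi$, which is manifest.

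I do not expect any serious obstacle; the principal delicacy is spotting the algebraic identity satisfied by $\eta'$ that exactly bridges the two thresholds $1/(4\sqrt{2})$ and $1/(8\sqrt{2})$. Once that is in place, the estimates assemble by a single integration of the ODE, and the restriction $p \in (0, 1/2]$ is used only to secure the uniform lower bound $w_p'(0) > 1/(4\sqrt{2})$.
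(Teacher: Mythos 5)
Your proposal is correct and follows essentially the same route as the paper: integrate the ODE using $w_p'\le 2/\sqrt{\pi}$ and a lower bound of $-1$ on the reaction terms to get $w_p'(\eta)\ge w_p'(0)-\eta^2/(2\sqrt{\pi})-\eta$, combine with $w_p'(0)>1/(4\sqrt{2})$ from \eqref{29*} for $p\in(0,1/2]$, and recognise $\eta'$ as the root of $\eta^2/(2\sqrt{\pi})+\eta=1/(8\sqrt{2})$. The only (immaterial) difference is that you bound $-w_p^p>-1$ and discard the nonnegative term $w_p/(1-p)$ separately, whereas the paper bounds the combination $w_p/(1-p)-w_p^p$ below by $-(1-p)^{p/(1-p)}\ge -1$ via the auxiliary function $H_p$.
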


\begin{proof}
It follows from \eqref{28*} that
\begin{equation} \label{12A} \int_0^\eta w_p''(s) ds = \int_0^\eta \left( -\frac{1}{2}sw_p'(s) + \frac{1}{(1-p)}w_p(s) - (w_p(s))^p \right) ds \ \ \ \forall \eta \in [0,\infty ). \end{equation}
For $p\in (0,1/2]$ we next observe that $H_p:[0,(1-p)^{1/(1-p)}]\to\field{R}$ given by
\begin{equation} \label{12B} H_p(x) = \frac{1}{(1-p)}x - x^p \ \ \ \forall x\in [0,(1-p)^{1/(1-p)}] \end{equation}
satisfies
\begin{equation} \label{12C} H_p(x) \geq -(1-p)^{p/(1-p)} \geq -1 \ \ \ \forall x\in [0,(1-p)^{1/(1-p)}] . \end{equation}
Thus, it follows from Proposition \ref{P2}, \eqref{S2}, \eqref{12A}, \eqref{12B} and \eqref{12C} that 
\begin{equation} \label{12D} w_p'(\eta ) - w_p'(0) \geq \int_0^\eta \left( -\frac{1}{\sqrt{\pi}}s-1\right) ds \geq -\frac{1}{2\sqrt{\pi}}\eta^2 - \eta \ \ \ \forall \eta \in [0,\infty ). \end{equation}
Now, from \eqref{29*} we also have
\begin{equation} \label{12E} w_p'(0) > \frac{(1-p)^{1/(1-p)}}{\sqrt{1+p}} \geq \frac{1}{4\sqrt{2}} \end{equation}
for all $p\in (0,1/2]$. Therefore, it follows from \eqref{12D} and \eqref{12E} that 
\begin{equation} \label{12F} w_p'(\eta ) \geq w_p'(0) - \frac{1}{2\sqrt{\pi}}\eta^2 - \eta \geq \frac{1}{4\sqrt{2}} - \frac{1}{8\sqrt{2}} = \frac{1}{8\sqrt{2}} \ \ \ \forall \eta \in [0, \eta '] \end{equation}
with $\eta '$ given by \eqref{12A'}. Now, it follows from \eqref{12F} and \eqref{30*} that  
\[ w_p(\eta ) = \int_0^\eta w_p'(s)ds \geq \int_0^\eta \frac{1}{8\sqrt{2}} ds = \frac{1}{8\sqrt{2}}\eta \ \ \ \forall \eta \in [0,\eta '] . \]
Finally, via \eqref{S4}, we have
\[ w_p(\eta ) \geq w_p(\eta ') \geq \frac{\eta '}{8\sqrt{2}} \ \ \ \forall \eta \in (\eta ' ,\infty ) ,\]
as required.
\end{proof}
\noindent We now have,  

\begin{remk} \label{R4}
It follows from Proposition \ref{P1} and Proposition \ref{P2} that $\{w_p \}_{p\in (0,1)}$ and $\{w_p'\}_{p\in (0,1)}$ are uniformly bounded and equicontinuous on $[0,X]$ for each $X>0$.
\end{remk}

We next define the sequence $\{ p_n\}_{n\in\field{N}}$ such that $p_n=1/(2n)$ and the sequence of functions $\{ v_n\}_{n\in\field{N}}$ such that  
\begin{equation} \label{u_n} v_n = w_{p_n}:[0,\infty )\to\field{R} , \end{equation} 
after which we may state, 

\begin{lemma} \label{L5}
There exists a function $w_*:[0,\infty )\to\field{R}$ such that $w_*\in C^1([0,\infty ))$ and for any $X>0$, the sequence of functions $\{v_n \}_{n\in\field{N}}$ given by \eqref{u_n} has a subsequence $\{v_{n_l}\}_{l\in\field{N}}$ with ($1\leq n_1 <n_2 < ...$ and $n_l\to\infty$ as $l\to\infty$) that satisfies
\begin{equation} \label{50A} v_{n_l}\to w_* \text{ as }l\to\infty \text{ uniformly on } [0,X]   , \end{equation}
\begin{equation} \label{50B} v_{n_l}'\to w_*'\text{ as }l\to\infty \text{ uniformly on } [0,X]   . \end{equation}
\end{lemma}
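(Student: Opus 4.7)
The plan is to combine the Arzelà--Ascoli theorem with a Cantor diagonal argument. By Remark \ref{R4}, for every $X>0$ the families $\{v_n\}_{n\in\field{N}}$ and $\{v_n'\}_{n\in\field{N}}$, restricted to $[0,X]$, are both uniformly bounded and equicontinuous, which is exactly the hypothesis needed to extract uniformly convergent subsequences on that interval. The work is therefore to arrange a single subsequence that handles every compact $[0,X]$ simultaneously, and then to certify that the common limit lies in $C^1([0,\infty))$.

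Concretely, I would proceed inductively on $k\in\field{N}$. For $k=1$, apply Arzelà--Ascoli to $\{v_n\}$ on $[0,1]$ to extract a uniformly convergent subsequence, then pass along that subsequence and apply Arzelà--Ascoli once more to the derivatives, yielding a subsequence $\{v_n^{(1)}\}$ along which both $v_n^{(1)}$ and $(v_n^{(1)})'$ converge uniformly on $[0,1]$. Having built $\{v_n^{(k)}\}$, extract $\{v_n^{(k+1)}\}$ from $\{v_n^{(k)}\}$ in the same two-step manner so that $v_n^{(k+1)}$ and $(v_n^{(k+1)})'$ converge uniformly on $[0,k+1]$. Taking the diagonal sequence $v_{n_l}:=v_l^{(l)}$ (with indices chosen strictly increasing in $l$), for any $X>0$ fix an integer $k\geq X$: then $\{v_{n_l}\}_{l\geq k}$ is a tail of a subsequence of $\{v_n^{(k)}\}$, so both $v_{n_l}$ and $v_{n_l}'$ converge uniformly on $[0,k]$, hence on $[0,X]$.

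Write $w_*$ for the pointwise limit of $v_{n_l}$ on $[0,\infty)$ and $g$ for the pointwise limit of $v_{n_l}'$. By the classical theorem on uniform convergence of derivatives, applied on each $[0,X]$, $w_*$ is differentiable on $[0,X]$ with $w_*'=g$, and $g$ is continuous as a uniform limit of continuous functions; since $X>0$ is arbitrary this gives $w_*\in C^1([0,\infty))$, establishing \eqref{50A} and \eqref{50B}. The whole argument is soft-analytic; the only real care needed is the book-keeping of the diagonal extraction, and the verification that the inductive hypotheses of Arzelà--Ascoli are met, both of which are delivered directly by Proposition \ref{P1} and Proposition \ref{P2} via Remark \ref{R4}. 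There is no analytic obstacle beyond this.
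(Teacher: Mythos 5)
Your proposal is correct and follows essentially the same route as the paper: Arzel\`a--Ascoli applied first to $\{v_n\}$ and then to the derivatives along the resulting subsequence (both justified by Propositions \ref{P1} and \ref{P2} via Remark \ref{R4}), followed by the theorem on uniform convergence of derivatives to identify $w_*'$ and conclude $w_*\in C^1$. The only cosmetic difference is that you organise the passage from compact intervals to $[0,\infty)$ as an explicit Cantor diagonal extraction, whereas the paper phrases it as taking nested subsequences associated with the intervals $[0,[X]+1]$; these are interchangeable.
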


\begin{proof}
To begin, fix $X>0$. It follows from Proposition \ref{P1} that the sequence $\{v_n\}_{n\in\field{N}}$ in \eqref{u_n} is uniformly bounded and equicontinuous on $[0,X]$. Therefore, via \cite[Theorem 7.25]{WR1} there exists a subsequence $\{v_{n_j}\}_{j\in\field{N}}$ of $\{ v_{n}\}_{n\in \field{N}}$ ($1\leq n_1 <n_2 < ...$ and $n_j\to\infty$ as $j\to\infty$) and $w_*\in C([0,X])$ such that  
\begin{equation} \label{5A} v_{n_j}\to w_*\text{ as }j\to\infty \text{ uniformly on }[0,X]  , \end{equation}
and
\begin{equation} \label{5A'} w_*(\eta ) = \lim_{j\to\infty }v_{n_j}(\eta ) \ \ \ \forall \eta \in [0,X]. \end{equation}
Moreover, it follows from Proposition \ref{P2} that $\{v_{n_j}'\}_{j\in\field{N}}$ is uniformly bounded and equicontinuous on $[0,X]$. Again, via \cite[Theorem 7.25]{WR1} there exists a subsequence $\{ v_{n_k'}'\}_{k\in\field{N}}$ of $\{ v_{n_j}'\}_{j\in \field{N}}$ and $\tilde{v}\in C([0,X])$ such that
\begin{equation} \label{5B} v_{n_k'}'\to \tilde{v} \text{ as }k\to\infty  \text{ uniformly on }[0,X]  , \end{equation}
and
\begin{equation} \label{5B'} \tilde{v}(\eta ) = \lim_{k\to\infty }v_{n_k'}'(\eta ) \ \ \ \forall \eta \in [0,X]. \end{equation}
It now follows from \eqref{5A}-\eqref{5B'} with \cite[Theorem 7.17]{WR1} that $w_*\in C^1([0,X])$ and 
\begin{equation} \label{5C} w_*'(\eta ) = \tilde{v}(\eta ) \ \ \ \forall \eta \in [0,X]. \end{equation}
Now, since \eqref{5A}-\eqref{5C} holds for any $X>0$, we may define $w_*\in C^1([0,1])$, and then, by taking nested subsequences of the original subsequences, extend the definition of $w_*\in C^1([0,2])$. This process can be repeated so that the definition is extended to $w_*\in C^1([0,\infty ))$, with \eqref{50A} and \eqref{50B} holding on $[0,X]$, with the suitably generalised nested subsequence associated with the interval $[0, [X]+1]$.
\end{proof}

\begin{remk} \label{R6}
Lemma \ref{L5} ensures the existence of a function $w_*:[0,\infty )\to\field{R}$ such that $w_*\in C^1([0,\infty ))$, and there exists a subsequence $\{ v_{n_l}\}_{l\in\field{N}}$ of $\{ v_n\}_{n\in\field{N}}$ given by \eqref{u_n}, that satisfies
\[ w_*(\eta ) = \lim_{l\to\infty} v_{n_l}(\eta ) \ \ \ \forall \eta \geq 0 ,\]
\[ w_*'(\eta ) = \lim_{l\to\infty} v_{n_l}'(\eta ) \ \ \ \forall \eta \geq 0. \]
Additionally, via Proposition \ref{P1}, Proposition \ref{P2} and \eqref{S3}, it follows that
\[ 0\leq w_*(\eta )  \leq 1, \ \ \ 0 \leq w_*' (\eta )  \leq \frac{2}{\sqrt{\pi}} \ \ \ \forall \eta \geq 0 ,\]
and
\[ w_*(0) = 0 , \]
whilst
\[ w_*(\eta ) \geq \begin{cases} \frac{1}{8\sqrt{2}} \eta &, 0 \leq \eta \leq \eta ' \\ \frac{1}{8\sqrt{2}} \eta ' &, \eta > \eta ' \end{cases} \]
via Proposition \ref{P12}.
\end{remk}

\noindent We now have,

\begin{proposition} \label{NNN}
Let $X_2>X_1>0$. Then $w_*\in C^2([X_1,X_2])$ and,
\[ w_*'' + \frac{1}{2}\eta w_*' + 1 - w_* = 0 \ \ \ \forall \eta \in [X_1,X_2]. \]
\end{proposition}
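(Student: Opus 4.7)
The plan is to pass to the limit directly in the ODE \eqref{28*} rearranged as a formula for $v_{n_l}''$. Since $v_n \geq 0$ on $[0,\infty)$ (by Proposition \ref{P1} and \eqref{30*}), we may write $f_{p_n}(v_n) = v_n^{p_n}$, and \eqref{28*} rearranges to
\[ v_{n_l}''(\eta) = -\tfrac{1}{2}\eta \, v_{n_l}'(\eta) + \tfrac{1}{1-p_{n_l}} v_{n_l}(\eta) - v_{n_l}(\eta)^{p_{n_l}} \qquad \forall \eta \in [X_1,X_2]. \]
I will show the right hand side converges uniformly on $[X_1,X_2]$ to $-\tfrac{1}{2}\eta w_*'(\eta) + w_*(\eta) - 1$, and then invoke \cite[Theorem 7.17]{WR1} on the sequence $\{v_{n_l}'\}$ to conclude that $w_* \in C^2([X_1,X_2])$ with $w_*''$ equal to this uniform limit, which rearranges to the stated ODE.

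The uniform convergence of the first two terms on the right hand side is immediate from Lemma \ref{L5} (Remark \ref{R6}) together with $p_{n_l} \to 0$. The main obstacle is the uniform convergence of the nonlinear term $v_{n_l}^{p_{n_l}}$ to $1$ on $[X_1,X_2]$, and this is precisely where the restriction $X_1 > 0$ enters (pointwise $v_{n_l}^{p_{n_l}} \to 1$ only where $v_{n_l}$ is bounded away from $0$, which fails at $\eta = 0$). To handle this, I will use Proposition \ref{P12} together with monotonicity of $v_n$ (from \eqref{S4}) to obtain a constant $c = c(X_1) > 0$, independent of $l$, such that
\[ c \leq v_{n_l}(\eta) \leq 1 \qquad \forall \eta \in [X_1,X_2], \]
valid for all $l$ with $p_{n_l} \leq 1/2$ (hence for all $l$, as $p_n = 1/(2n)$). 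Specifically, if $X_1 \leq \eta'$ then $v_{n_l}(X_1) \geq X_1/(8\sqrt{2})$ and monotonicity gives the lower bound on $[X_1,X_2]$; if $X_1 > \eta'$ then $v_{n_l}(\eta) \geq \eta'/(8\sqrt{2})$ directly.

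With this two-sided bound, $|\ln v_{n_l}(\eta)| \leq |\ln c|$ uniformly in $\eta \in [X_1,X_2]$ and in $l$, so
\[ |v_{n_l}(\eta)^{p_{n_l}} - 1| = |e^{p_{n_l} \ln v_{n_l}(\eta)} - 1| \leq p_{n_l} |\ln c| \, e^{p_{n_l} |\ln c|} \longrightarrow 0 \]
as $l \to \infty$, uniformly on $[X_1,X_2]$. Combined with $v_{n_l} \to w_*$ and $v_{n_l}' \to w_*'$ uniformly (Lemma \ref{L5}), this yields that $v_{n_l}''$ converges uniformly on $[X_1,X_2]$ to the continuous function $G(\eta) := -\tfrac{1}{2}\eta w_*'(\eta) + w_*(\eta) - 1$. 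Applying \cite[Theorem 7.17]{WR1} to the sequence $\{v_{n_l}'\}$ on $[X_1,X_2]$ (whose derivatives converge uniformly and which itself converges at, say, $X_1$), I conclude that $w_*' \in C^1([X_1,X_2])$ with $w_*''(\eta) = G(\eta)$ for all $\eta \in [X_1,X_2]$, which is the required ODE.
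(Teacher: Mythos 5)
Your proof is correct and follows essentially the same route as the paper: rearrange \eqref{28*} for $v_{n_l}''$, show the right-hand side converges uniformly on $[X_1,X_2]$, and invoke \cite[Theorem 7.17]{WR1}. The only difference is presentational: where the paper cites the lower bound on $w_*$ from Remark \ref{R6} to justify uniform convergence of $(v_{n_l})^{p_{n_l}}\to 1$, you obtain the uniform lower bound $c(X_1)>0$ directly on the $v_{n_l}$ via Proposition \ref{P12} and monotonicity, and make the estimate $|v^{p}-1|\leq p|\ln c|e^{p|\ln c|}$ explicit, which is a slightly more self-contained version of the same step.
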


\begin{proof}
Set $X_2>X_1>0$. It then follows from Lemma \ref{L5} that there is a subsequence $\{v_{n_l} \}_{l\in \field{N}}$ of $\{ v_n \}_{n\in\field{N}}$ such that, 
\begin{equation} \label{M1} v_{n_l}\to w_* \text{ as }l\to\infty \text{ uniformly on } [X_1,X_2]   , \end{equation}
\begin{equation} \label{M2} v_{n_l}'\to w_*'\text{ as }l\to\infty \text{ uniformly on } [X_1,X_2]   . \end{equation}
Also, via \eqref{u_n},
\begin{equation} \label{M3} v_{n_l}'' = -\frac{1}{2} \eta v_{n_l}' - (v_{n_l})^{p_{n_l}} + \frac{v_{n_l}}{(1-p_{n_l})} \ \ \ \forall \eta \in [X_1,X_2] .\end{equation}
We now observe that $w_*$ is bounded above zero on $[X_1,X_2]$, via Remark \ref{R6}, and so it follows from \eqref{M1}-\eqref{M3}, that 
\begin{equation} \label{M4} v_{n_l}'' \to -\frac{1}{2} \eta w_*' - 1 + w_* \text{ as }l\to\infty \text{ uniformly on }[X_1,X_2] . \end{equation}
Finally, via \eqref{M4} and \cite[Theorem 7.17]{WR1}, we conclude that $w_*\in C^2([X_1,X_2])$ and 
\begin{equation} \label{M5} v_{n_l}'' \to w_*'' \text{ as } l\to\infty \text{ uniformly on }[X_1,X_2] . \end{equation}
The proof is completed via \eqref{M4}, \eqref{M5} and uniqueness of limits. 
\end{proof}

We now investigate the behavior of $w_*:[0,\infty )\to\field{R}$ as $\eta \to\infty$. To begin, we have,

\begin{lemma} \label{P7}
Consider $w_p:[0,\infty )\to\field{R}$ with $p\in (0,1/2]$. Then
\begin{equation} \label{D14} 0 < w_p'(\eta ) < \frac{2}{\sqrt{\pi}} e^{-\eta^2 / 4} \ \ \ \forall \eta \geq 0, \end{equation}
and 
\[ -2\mathrm{erfc}\left( \frac{1}{2} \eta \right) \leq w_p(\eta ) - (1-p)^{1/(1-p)}\leq 0 \ \ \ \forall \eta\geq 0 . \]
\end{lemma}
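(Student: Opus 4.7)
The plan is to treat the two inequalities separately, with the pointwise gaussian bound on $w_p'$ being the main step. The key observation is that the ODE in \eqref{28*} admits $e^{\eta^2/4}$ as an integrating factor for the $w_p''$ and $\tfrac{1}{2}\eta w_p'$ terms. I would therefore set $g(\eta) := e^{\eta^2/4} w_p'(\eta)$ and show that $g$ is strictly decreasing on $(0,\infty)$; the second inequality then follows by integrating the first from $\eta$ to $\infty$ and using \eqref{S3}.

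For the first inequality, note that for $\eta > 0$ we have $0 < w_p(\eta) < (1-p)^{1/(1-p)}$ by \eqref{S4}, \eqref{30*}, and \eqref{S2}, so $w_p(\eta)^{1-p} < 1-p$, whence $f_p(w_p(\eta)) = w_p(\eta)^p > w_p(\eta)/(1-p)$. The ODE \eqref{28*} then reads
$$w_p''(\eta) + \tfrac{1}{2}\eta w_p'(\eta) = \tfrac{1}{1-p}w_p(\eta) - f_p(w_p(\eta)) < 0 \quad \forall \eta > 0,$$
and multiplication by $e^{\eta^2/4}$ gives $g'(\eta) < 0$ for $\eta > 0$. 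Hence $g(\eta) < g(0) = w_p'(0)$ for $\eta > 0$, and combining with $w_p'(0) < 2/\sqrt{\pi}$ from \eqref{D10} delivers $w_p'(\eta) < \tfrac{2}{\sqrt{\pi}} e^{-\eta^2/4}$ for all $\eta \geq 0$. The lower bound $w_p'(\eta) > 0$ is immediate from \eqref{S4}.

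For the second inequality, the upper bound $w_p(\eta) - (1-p)^{1/(1-p)} \leq 0$ follows at once from \eqref{S2}. For the lower bound, I would integrate the just-established pointwise estimate on $w_p'$ from $\eta$ to $R$ and let $R \to \infty$, using \eqref{S3} to evaluate the left-hand side as $(1-p)^{1/(1-p)} - w_p(\eta)$. This gives
$$(1-p)^{1/(1-p)} - w_p(\eta) \leq \int_\eta^\infty \frac{2}{\sqrt{\pi}} e^{-s^2/4}\, ds = 2\,\mathrm{erfc}(\eta/2),$$
where the final identity follows from the substitution $s = 2t$. No serious obstacle is anticipated; the restriction $p \in (0,1/2]$ does not appear to be essential for this lemma itself, and seems to be retained for compatibility with the hypotheses of Proposition \ref{P12}, which is invoked alongside it elsewhere in the section.
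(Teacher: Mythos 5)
Your proposal is correct and follows essentially the same route as the paper's proof: the paper likewise derives $w_p'' + \tfrac{\eta}{2}w_p' = \tfrac{1}{1-p}w_p - f_p(w_p) < 0$ on $(0,\infty)$, concludes $w_p'(\eta) < w_p'(0)e^{-\eta^2/4}$ (your integrating-factor step, left implicit there), invokes the bound $w_p'(0) < 2/\sqrt{\pi}$, and then integrates from $\eta$ to $\eta_l$ and lets $\eta_l\to\infty$ using \eqref{S3}. Your side remark that the restriction $p\in(0,1/2]$ is not essential here is also accurate.
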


\begin{proof}
Via \eqref{28*}, \eqref{S2}, \eqref{S4} and \eqref{29*}, we have 
\begin{equation} \label{7A} w_p'' + \frac{\eta}{2}w_p' = \frac{1}{(1-p)}w_p - f_p(w_p) < 0 \ \ \ \forall \eta \in (0,\infty ). \end{equation}
Therefore,
\begin{equation} \label{7B} w_p'(\eta ) < w_p'(0)  e^{-\eta^2 / 4}\ \ \ \forall  \eta \geq 0 . \end{equation}
The first inequality then follows from \eqref{7B} and Proposition \ref{P2}. An integration of \eqref{D14} then gives
\begin{equation} \label{D15} 0 < w_p(\eta_l ) - w_p(\eta ) < \frac{2}{\sqrt{\pi}} \int_\eta^{\eta_l}  e^{-\lambda^2/4}d\lambda \end{equation}
for any $\eta_l > \eta > 0$. Allowing $\eta_l\to\infty$ in \eqref{D15}, using \eqref{S3}, then results in 
\[ -2\text{erfc}\left( \frac{1}{2} \eta \right) \leq w_p(\eta ) - (1-p)^{1/(1-p)}\leq 0 \ \ \ \forall \eta\geq 0 , \]
as required.
\end{proof}
\noindent We now have

\begin{corollary} \label{P9}
Consider $w_p:[0,\infty )\to\field{R}$ with $p\in (0,1/2]$. Then,
\[ w_p(\eta ) \to (1-p)^{1/(1-p)} \ \text{ as }\ \eta \to\infty \text{ uniformly for } p\in (0,1/2] . \]
\end{corollary}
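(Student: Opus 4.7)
The plan is that this corollary is essentially an immediate consequence of the second inequality in Lemma \ref{P7}. First, I would observe that the bound established in Lemma \ref{P7} can be rewritten as
\[ |w_p(\eta) - (1-p)^{1/(1-p)}| \leq 2\,\mathrm{erfc}\!\left( \tfrac{1}{2}\eta \right) \qquad \forall \eta \geq 0, \]
which holds for every $p \in (0,1/2]$. The key point is that the right-hand side of this inequality is entirely independent of $p$.

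Next, I would appeal to the standard fact that $\mathrm{erfc}(\eta/2) \to 0$ as $\eta \to \infty$. Given $\varepsilon > 0$, choose $N>0$ such that $2\,\mathrm{erfc}(\eta/2) < \varepsilon$ for all $\eta \geq N$; this $N$ may be chosen independently of $p$. Then for all $p \in (0,1/2]$ and all $\eta \geq N$, we have $|w_p(\eta) - (1-p)^{1/(1-p)}| < \varepsilon$, which is precisely the statement of uniform convergence on $p \in (0,1/2]$.

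Since the proof reduces to reading off a uniform (in $p$) majorant from an already established inequality and invoking the decay of $\mathrm{erfc}$, there is no real obstacle to navigate here; the work has already been done in Lemma \ref{P7}, where the upper bound $-2\,\mathrm{erfc}(\eta/2)$ on $w_p(\eta) - (1-p)^{1/(1-p)}$ was obtained via integration of the $p$-independent Gaussian bound $w_p'(\eta) < (2/\sqrt{\pi})e^{-\eta^2/4}$.
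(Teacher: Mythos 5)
Your proposal is correct and is exactly the argument the paper intends: the paper's proof of this corollary consists of the single line that it ``follows immediately from Lemma \ref{P7},'' and your write-up simply spells out why the $p$-independent majorant $2\,\mathrm{erfc}(\eta /2)$ yields uniformity in $p\in (0,1/2]$. No issues.
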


\begin{proof}
This follows immediately from Lemma \ref{P7}.
\end{proof}

As a consequence of Corollary \ref{P9}, we now have

\begin{lemma} \label{L10}
The function $w_*:[0,\infty )\to \field{R}$ satisfies, 
\[ w_*(\eta ) \to 1 \text{ as }\eta \to \infty . \]
\end{lemma}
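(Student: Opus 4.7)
The plan is to transfer the uniform-in-$p$ decay estimate from Lemma \ref{P7} to the limit function $w_*$ by a pointwise passage to the limit along the subsequence $\{v_{n_l}\}_{l\in\mathbb{N}}$ produced by Lemma \ref{L5} (equivalently, Remark \ref{R6}). Since $p_n = 1/(2n) \in (0,1/2]$ for every $n \in \mathbb{N}$, Lemma \ref{P7} applies to each $v_{n_l} = w_{p_{n_l}}$ without restriction, giving the two-sided bound
\[ -2\operatorname{erfc}\!\left(\tfrac{1}{2}\eta\right) \leq v_{n_l}(\eta) - (1-p_{n_l})^{1/(1-p_{n_l})} \leq 0 \qquad \forall \eta \geq 0,\ \forall l\in\mathbb{N}. \]

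First I would fix $\eta \geq 0$ and let $l \to \infty$. By Remark \ref{R6} we have $v_{n_l}(\eta) \to w_*(\eta)$ pointwise. Separately, since $p_{n_l} \to 0$ as $l \to \infty$, the elementary limit $(1-p)^{1/(1-p)} \to 1$ as $p \to 0^+$ (which is immediate from $\log(1-p)/(1-p) \to 0$) yields $(1-p_{n_l})^{1/(1-p_{n_l})} \to 1$. Because the bounding function $2\operatorname{erfc}(\eta/2)$ on the left and $0$ on the right are independent of $l$, the two-sided inequality passes to the limit and produces
\[ -2\operatorname{erfc}\!\left(\tfrac{1}{2}\eta\right) \leq w_*(\eta) - 1 \leq 0 \qquad \forall \eta \geq 0. \]

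Finally, since $\operatorname{erfc}(\eta/2) \to 0$ as $\eta \to \infty$, the squeeze yields $w_*(\eta) \to 1$ as $\eta \to \infty$, which is the assertion of Lemma \ref{L10}. There is no real obstacle here: the only point requiring a moment's care is that the bound in Lemma \ref{P7} is genuinely uniform in $p \in (0,1/2]$ (so no $p$-dependent error creeps in when interchanging the $l \to \infty$ and $\eta \to \infty$ limits), and that the limit of $(1-p_{n_l})^{1/(1-p_{n_l})}$ really is $1$ rather than something like $e^{-1}$; both are straightforward. No additional regularity of $w_*$ beyond the pointwise convergence already guaranteed by Remark \ref{R6} is needed for this argument.
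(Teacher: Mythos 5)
Your proof is correct and follows essentially the same route as the paper: both arguments rest on the uniform-in-$p$ tail estimate of Lemma \ref{P7} and transfer it to $w_*$ via the pointwise convergence $v_{n_l}(\eta)\to w_*(\eta)$ together with $(1-p_{n_l})^{1/(1-p_{n_l})}\to 1$. The only cosmetic difference is that you keep the explicit $\operatorname{erfc}$ envelope and squeeze directly, whereas the paper routes through Corollary \ref{P9} and an $\epsilon$--$\liminf$/$\limsup$ argument; the content is identical.
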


\begin{proof}
It follows from Remark \ref{R6} that 
\begin{equation} \label{95} \limsup_{\eta\to\infty} w_*(\eta ) \leq 1 . \end{equation}
Now, from Corollary \ref{P9}, for any $\epsilon >0 $, there exists $\eta_*>0$ (dependent only upon $\epsilon$) such that for all $p\in (0,1/2]$, then  
\begin{equation} \label{10C} w_p(\eta ) \geq (1-p)^{1/(1-p)} - \epsilon \ \ \ \forall \eta\geq \eta^* . \end{equation}
Thus, via \eqref{10C} and Remark \ref{R6}, 
\[ w_* (\eta ) \geq 1-\epsilon \ \ \ \forall \eta \geq \eta^* \]
and so,
\begin{equation} \label{D16} \liminf_{\eta\to\infty} w_*(\eta ) \geq 1 - \epsilon . \end{equation}
Since \eqref{D16} holds for any $\epsilon >0$, then 
\begin{equation} \label{98} \liminf_{\eta\to\infty} w_*(\eta ) \geq 1 . \end{equation}
It follows immediately from \eqref{95} and \eqref{98} that the limit of $w_*(\eta )$ as $\eta \to\infty$ exists and 
\[ \lim_{\eta\to\infty} w_*(\eta ) = 1 , \]
as required.
\end{proof}
\noindent Thus, we have,

\begin{remk} \label{R11}
Via Remark \ref{R6} and Lemma \ref{L10}, the function $w_*:[0,\infty )\to \field{R}$ satisfies; 
\[ w_*\in C^1([0,\infty ))\cap C^2((0,\infty )) ,\]
\[ w_*(\eta ) \leq 1 \ \ \ \forall \eta \in [0,\infty ) ,\]
\[ w_*(\eta ) \geq \begin{cases} \frac{1}{8\sqrt{2}} \eta &, 0 \leq \eta \leq \eta ' \\ \frac{1}{8\sqrt{2}}\eta ' &, \eta > \eta ' ,\end{cases} \]
\[ 0 \leq w_*'(\eta ) \leq \frac{2}{\sqrt{\pi}}\ \ \ \forall \eta \in [0,\infty ), \]
\[ w_*(0)= 0,\ \ \ \lim_{\eta\to\infty} w_*(\eta ) = 1,\]
with 
\[ \eta ' = \sqrt{\pi} \left(\sqrt{ 1 + \frac{1}{4\sqrt{2\pi}}} -1 \right) <1 . \]
\end{remk}

We now have

\begin{proposition} \label{T14}
The function $w_*:[0,\infty )\to\field{R}$ is given by
\[ w_*(\eta ) = 1 - \frac{ 4(2+\eta^2 ) I(\eta )}{\sqrt{\pi}} \ \ \ \forall \eta \geq 0. \]
\end{proposition}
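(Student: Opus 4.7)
The plan is to identify $w_*$ with the unique solution $w_0$ of the problem (S$_0$) constructed in \textsection 3, by checking that $w_*$ satisfies every condition of (S$_0$) and then invoking uniqueness via the general solution representation $Aw_1+Bw_2+\bar{w}$.

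First I would promote Proposition \ref{NNN} from local intervals $[X_1,X_2]$ to the full half-line $(0,\infty)$. Given any $\eta>0$, choose $0<X_1<\eta<X_2$; then Proposition \ref{NNN} gives $w_*\in C^2([X_1,X_2])$ and
\[ w_*''(\eta) + \tfrac{1}{2}\eta\, w_*'(\eta) + 1 - w_*(\eta) = 0, \]
so this ODE, which is exactly \eqref{B1}, holds for every $\eta>0$. Next I would collect the boundary behaviour already established: Remark \ref{R11} gives $w_*(0)=0$ and $w_*(\eta)>0$ for $\eta>0$ (via the strictly positive lower bound coming from Proposition \ref{P12}), and Lemma \ref{L10} gives $w_*(\eta)\to 1$ as $\eta\to\infty$. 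Thus $w_*$ satisfies \eqref{B1}, \eqref{B2} and \eqref{B3}, i.e.\ $w_*$ solves (S$_0$).

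To conclude, I would apply the argument already carried out in \textsection 3: since $w_*$ is a solution of the linear inhomogeneous ODE \eqref{B1} on $(0,\infty)$, there exist constants $A,B\in\field{R}$ with
\[ w_*(\eta) = A w_1(\eta) + B w_2(\eta) + \bar w(\eta), \quad \eta > 0. \]
Since $w_*\in C^1([0,\infty))$ by Lemma \ref{L5} and each of $w_1,w_2,\bar w$ is continuous at $0$, letting $\eta\to 0^+$ and using $w_*(0)=0$ yields one linear relation on $A,B$; letting $\eta\to\infty$ and using $w_*(\eta)\to 1$ together with $w_1(\eta)=2+\eta^2\to\infty$ and $w_2(\eta)\to 0$ forces $A=0$ and then $B=-4/\sqrt{\pi}$. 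Substituting back recovers the formula from \eqref{D11}, namely $w_*(\eta) = 1 - \tfrac{4}{\sqrt{\pi}}(2+\eta^2)I(\eta)$, as required.

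I do not anticipate a hard step here: all of the real analytic content (compactness of the family, passage to a limit, regularity at interior points, exponential decay of $w_p'$, uniform convergence to the asymptote $1$, and the lower bound keeping $w_*$ away from $0$) has been assembled in the preceding propositions. The only minor subtlety is that Proposition \ref{NNN} does not directly furnish $C^2$ regularity at $\eta=0$, but this is immaterial because uniqueness of (S$_0$) only requires the ODE on $(0,\infty)$ combined with the continuous boundary value at $0$ and the limit at infinity, both of which $w_*$ already provides.
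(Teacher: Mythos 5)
Your proposal is correct and follows essentially the same route as the paper: both identify $w_*$ as a solution of (S$_0$) by combining Proposition \ref{NNN} (promoted to all $\eta>0$) with the boundary data collected in Remark \ref{R11} and Lemma \ref{L10}, and then invoke the uniqueness of (S$_0$) established in \textsection 3. The paper simply cites that uniqueness argument rather than re-deriving the constants $A=0$, $B=-4/\sqrt{\pi}$ as you do, but the content is the same.
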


\begin{proof}
It follows from Proposition \ref{NNN}, that 
\[ w_*''+ \frac{1}{2}\eta w_*' + 1 - w_* = 0 \ \ \ \forall \eta >0 . \]
We conclude, via Remark \ref{R11}, that $w_*:[0,\infty )\to\field{R}$ satisfies the boundary value problem (S$_0$). It has been established in \textsection \ref{sec0} that (S$_0$) has a unique solution given by \eqref{D11}-\eqref{D12}, as required. 
\end{proof}

We immediately have,
\begin{corollary} \label{C15}
There exists a subsequence $\{p_{n_l}\}_{l\in\field{N}}$ of $\{p_n\}_{n\in\field{N}}$ such that 
\[ w_{p_{n_l}}'(0)\to \frac{2}{\sqrt{\pi}} \text{ as }l\to\infty . \]
\end{corollary}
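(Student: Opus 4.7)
The plan is to assemble Corollary~\ref{C15} directly from the machinery already built: Lemma~\ref{L5} provides a subsequence along which both $v_{n_l}$ and $v_{n_l}'$ converge uniformly on compact subsets of $[0,\infty)$ to some limit $w_*\in C^1([0,\infty))$, Proposition~\ref{T14} identifies $w_*$ as the explicit solution $w_0$ of (S$_0$), and \eqref{D13} evaluates $w_0'(0)=2/\sqrt{\pi}$. So the corollary reduces to reading off the derivative at the origin along the extracted subsequence.

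More concretely, I would first invoke Lemma~\ref{L5} (with, say, $X=1$) to obtain indices $n_1<n_2<\cdots$ with $n_l\to\infty$ such that $v_{n_l}\to w_*$ and $v_{n_l}'\to w_*'$ uniformly on $[0,1]$. Uniform convergence of the derivatives on $[0,1]$ implies, in particular, pointwise convergence at $\eta=0$, so by the definition \eqref{u_n} of $v_n$,
\[
w_{p_{n_l}}'(0) \;=\; v_{n_l}'(0) \;\longrightarrow\; w_*'(0) \quad \text{as } l\to\infty.
\]

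Next, I would apply Proposition~\ref{T14}, which identifies $w_*$ as the unique solution $w_0:[0,\infty)\to\field{R}$ of the problem (S$_0$), given explicitly by \eqref{D11}--\eqref{D12}. Since $w_*\in C^1([0,\infty))$ by Lemma~\ref{L5} and the explicit formula is valid on $[0,\infty)$, we have $w_*'(0)=w_0'(0)$, and by \eqref{D13} this equals $2/\sqrt{\pi}$. Combining with the previous display yields the desired conclusion.

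There is really no technical obstacle left at this stage: all of the work (uniform bounds, equicontinuity, the Arzelà--Ascoli extraction, identification of the limit with the explicit (S$_0$) solution, and the value $w_0'(0)=2/\sqrt{\pi}$) has been done in Propositions~\ref{P1}--\ref{P2}, Lemma~\ref{L5}, Proposition~\ref{T14}, and \eqref{D13}. The only small point to be careful about is that Lemma~\ref{L5} gives uniform convergence of the derivatives on each compact interval, which is stronger than what is needed here (pointwise convergence at $\eta=0$ suffices), so no further extraction or diagonalization is required.
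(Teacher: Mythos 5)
Your proposal is correct and follows essentially the same route as the paper: the paper likewise takes the subsequence furnished by Lemma \ref{L5} (as recorded in Remark \ref{R6}) to get $w_{p_{n_l}}'(0)\to w_*'(0)$, and then concludes via Proposition \ref{T14} and \eqref{D13} that $w_*'(0)=w_0'(0)=2/\sqrt{\pi}$. The only cosmetic difference is that you cite Lemma \ref{L5} directly with $X=1$ where the paper cites Remark \ref{R6}, which packages the same global subsequence.
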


\begin{proof}
It follows directly from \eqref{u_n} and Remark \ref{R6} that there exists a subsequence $\{ p_{n_l}\}_{l\in\field{N}}$ of $\{ p_n\}_{n\in\field{N}}$ such that,
\[ w_{p_{n_l}}'(0) \to w_*'(0) \text{ as }l\to\infty . \]
However, from Proposition \ref{T14}, \eqref{D11} and \eqref{D13},
\[ w_*'(0) = w_0'(0) = \frac{2}{\sqrt{\pi}} \]
and the proof is complete. 
\end{proof}

We are now able to give the proof of our main result

\begin{proof}[Proof of Theorem \ref{tmain}]
First fix $\alpha >0$ and $T>0$. Next consider the subsequence $\{ p_{n_l}\}_{l\in\field{N}}$ of $\{ p_n\}_{n\in\field{N}}$ corresponding to that in Corollary \ref{C15}. Let the constant 
\begin{equation} \label{D19} c(\alpha , T) = \frac{\sqrt{\pi}}{2T^{1/2}} (\alpha + 1) . \end{equation}
We now introduce the sequence of functions $\{ u^{(l)}:\bar{D}_T\to\field{R} \}_{l\in\field{N}}$ as 
\begin{equation} \label{D20} u^{(l)}(x,t)= \begin{cases} w_{p_{n_l}}\left(\frac{x}{\sqrt{t}}\right)(c(\alpha , T) t)^{1/(1-p_{n_l})} &; (x,t)\in [0,\infty )\times (0,T] \\ -w_{p_{n_l}}\left(\frac{-x}{\sqrt{t}}\right)(c(\alpha , T) t)^{1/(1-p_{n_l})} &; (x,t)\in (-\infty , 0) \times (0,T] \\ 0 &; (x,t)\in (-\infty , \infty )\times \{ 0 \} .\end{cases} \end{equation}
It is straightforward to verify directly, via \eqref{D19}, \eqref{D20}, \eqref{u_n}, \eqref{S1} and \eqref{smiles}, that for each $l\in\field{N}$, 
\begin{equation} \label{D21} (c(\alpha , T)f_{p_{n_l}} , 0 ,u^{(l)}) \in\mathcal{I}_T .\end{equation}
In addition, via \eqref{D20}, \eqref{S1} and \eqref{S4}, we have
\begin{equation} \label{D22} ||u_x^{(l)}(\cdot , t)||_B = w_{p_{n_l}}'(0) c(\alpha , T)^{1/(1-p_{n_l})}t^{(1+p_{n_l})/2(1-p_{n_l})} \ \ \ \forall t\in [0,T], \end{equation}
and so, in particular,
\begin{equation} \label{D23} ||u_x^{(l)}(\cdot , T)||_B \to (\alpha + 1) \text{ as } l\to\infty , \end{equation}
via \eqref{D22} and \eqref{D19} with Corollary \ref{C15}. Therefore, there exists $L\in \field{N}$ such that 
\begin{equation} \label{D24} || u_x^{(l)}(\cdot ,T)||_B \geq \alpha \ \ \ \forall l\geq L. \end{equation}
Therefore, for each $l\geq L$, 
\begin{equation} \label{D25} (c(\alpha , T)f_{p_{n_l}} , 0 ,  u^{(l)})\in\mathcal{I}_T^\alpha . \end{equation}
Finally, it follows as in \eqref{D8}, that for each $l\geq L$,
\begin{align*}
\inf_{t\in [0,T]} & \left( ||u_x^{(l)}(\cdot , t)||_B - \mathcal{F}_t (c(\alpha , T) f_{p_{n_l}} , 0 , u^{(l)})\right) \\
& = c(\alpha , T)^{1/(1-p_{n_l})} T^{(1+p_{n_l})/2(1-p_{n_l})}\left( w_{p_{n_l}}'(0) - \phi (p_{n_l}) \right) 
\end{align*}
and so, via Corollary \ref{C15} and \eqref{D5},
\begin{align*}
\lim_{l\to\infty} & \left( \inf_{t\in [0,T]}  \left(  ||u_x^{(l)}(\cdot , t)||_B - \mathcal{F}_t (c(\alpha , T) f_{p_{n_l}} , 0 , u^{(l)}) \right) \right) \\
& = c(\alpha , T)T^{1/2}\left( \frac{2}{\sqrt{\pi}} - \frac{2}{\sqrt{\pi}} \right) = 0 ,
\end{align*}
and the proof of Theorem \ref{tmain} is complete.
\end{proof}

\section{Discussion}
We note here that it is not possible to establish a proof of Theorem \ref{tmain} with a sequence of the form $\{ (g_n,u_0,u^{(n)})\in \mathcal{I}_T\}_{n\in\field{N}}$ with $g_n:\field{R}\to\field{R}$ anti-symmetric, Lipschitz continuous, and such that $g_n(u)\to 1$ as $n\to\infty$ for each $u>0$, with $u^{(n)}:\bar{D}_T\to\field{R}$ the unique solution to 
\[ u_t^{(n)}-u_{xx}^{(n)}-g_n(u^{(n)})=0 \ \ \ \text{ on } D_T \]
\[ u^{(n)}= 0 \ \ \ \text{ on }\partial D .\]
This follows since $u^{(n)} = 0$ on $\bar{D}_T$ for each $n\in\field{N}$, via uniqueness of solutions (see \cite[Theorem 6.1]{JMDN1}). However, we anticipate that a proof of Theorem \ref{tmain} can be established, somewhat more generically, by considering a sequence of the form $\{ (g_n,u_0,u^{(n)})\in \mathcal{I}_T\}_{n\in\field{N}}$, with $g_n$ and $u^{(n)}$ defined as above, but instead with non-zero initial data $u_0:\field{R}\to\field{R}$ given by
\[ u_0(x) =  w_0(x/\lambda^{1/2})\lambda \ \ \ \forall x\in\field{R},\]
for some fixed $\lambda >0$, with $w_0$ given by \eqref{D11}-\eqref{D12}. 

We anticipate that the approach adopted here can be used to show a similar sharpness result for the natural functional derivative estimate associated with solutions $u:\field{R}^N\times [0,T]\to\field{R}$ to the Cauchy problem,
\begin{equation} \label{final1} u_t - \Delta u = f(u) \ \ \ \text{ on } \field{R}^N\times (0,T], \end{equation}
\begin{equation} \label{final2} u=u_0 \ \ \ \text{ on } \field{R}^N\times \{ 0 \} ,\end{equation}
\begin{equation} \label{final3} u\in C(\bar{D}_T)\cap C^{2,1}(D_T)\cap L^\infty (\bar{D}_T).\end{equation}
Additionally, we note that it is likely that results of similar type to Theorem \ref{tmain} can be established for functional derivative estimates of the Dirichlet and Neumann problems associated with \eqref{final1}-\eqref{final3}.

\section*{Acknowledmnets} The first author was financially supported by an EPSRC PhD Prize. Additionally, the authors thank Prof. J. Bennett for reading an early version of the manuscript, and for his helpful suggestions.

\end{document}